\definecolor{mygreen}{RGB}{28,172,0} 
\definecolor{mylilas}{RGB}{170,55,241}
\newcolumntype{P}[1]{>{\centering\arraybackslash}p{#1}}
\newcommand{\abs}[1]{ \left\lvert#1\right\rvert}
\newcommand{\norm}[1]{\left\lVert#1\right\rVert}
\newtheorem{theorem}{Theorem}
\newtheorem{lemma}[theorem]{Lemma}
\newtheorem{remark}[theorem]{Remark}
\begin{document}
\author{Anindya Goswami}
\author{Kuldip Singh Patel*}\thanks{*Corresponding Author}
\author{Pradeep Kumar Sahu}
\title[Stability of variable coefficient PDEs]{A novel difference equation approach for the stability and robustness of compact schemes for variable coefficient PDEs}
\thanks{Anindya Goswami, Department of Mathematics, IISER Pune, India, anindya@iiserpune.ac.in,\\ Kuldip Singh Patel, Department of Mathematics, IIT Patna, Bihta, 801106, Bihar, India, kspatel@iitp.ac.in,\\ Pradeep Kumar Sahu, Department of Mathematics, IIT Patna, Bihta, 801106, Bihar, India, pradeep$\_$2321ma03@iitp.ac.in.}
\begin{abstract}
Fourth-order accurate compact schemes for variable coefficient convection diffusion equations are considered. A sufficient condition for the stability of the fully discrete problem is derived using a difference equation based approach. The constant coefficient problems are considered as a special case, and the unconditional stability of compact schemes for such case is proved theoretically. The condition number of the amplification matrix is also analyzed, and an estimate for the same is derived. The examples are provided to support the assumption taken to assure stability.
\end{abstract}
\maketitle
 {\bf {Keywords:}} Variable coefficient PDEs, Stability, Condition number, Compact schemes, Convection-diffusion equations.
\section{Introduction}\label{sec:intro}

\par The importance of convection-diffusion equations in the mathematical modeling of various physical phenomena is well known. These physical problems include option pricing problems in the stock market \cite{BlaS73}, computational fluid dynamics \cite{roach1976computational}, and various other applications \cite{isenberg1973heat, parlange1980water}. The development of numerical methods is inevitable to solve these equations due to the absence of analytical solutions in most cases. Various numerical methods are available in the literature to solve these partial differential equations (PDEs) \cite{AchP05, Can98, MKAW20}. The second-order accurate finite difference method (FDM) is one of them. The reason for the popularity of the FDM lies in its effortless implementation and showing desirable properties (consistency, stability, convergence, etc.) \cite{Tho95, TRan00, Jcstrik04}. Indeed, the FDM with extended computational stencil may also result in high-order accuracy, however, it complicates the implementation of boundary conditions. Moreover, the stability analysis in those cases becomes challenging. To overcome these difficulties, high-order accurate difference schemes were developed using compact stencils \cite{Rigal94, Spotz95, Vivek09}, commonly known as compact schemes.

\par The stability of a numerical method is desirable before its implementation to solve an initial or initial-boundary value problem. The compact schemes and their stability have been analyzed 
for different types of PDEs in the literature using different approaches. For example, in \cite{wang2018stability}, by considering a high-order compact alternating direction implicit (ADI) method for solving the three-dimensional unsteady convection–diffusion equation, its stability is demonstrated using the energy method. The stability of a compact scheme, based on the Hermitian formulation for constant coefficients parabolic problems is confirmed in \cite{wu2018stability,li2022stability}. 
While the former analyzed the spectral radius algebraically, the latter analyzed it by visualizing the eigenspectrum of the discretized operators. By similar visualization approach, the stability of the central compact scheme, employed for some first-order hyperbolic PDE, is investigated in \cite{vilar2015development}. Another instance of nonlinear equations appears in \cite{zhang2021convergence} where the Benjamin–Bona–Mahony equation is considered. In that, a compact finite difference scheme is utilized, and its stability aspect is discussed using the energy method. Needless to mention, various aspects of stability analysis of compact schemes extend beyond the class of classical PDEs. For example, such studies for fractional differential equations may be found in  \cite{li2018convergence,zhang2022pointwise}.

The compact scheme obtained via an auxiliary equation-based approach is known to be more efficient. We adopt that in this paper. So far the implementation of this class of compact schemes is concerned, various studies have already been reported for constant-coefficient convection-diffusion equations, where stability has been proved using the von Neumann method, see \cite{KPMA17, KPMD17, DurH15} and references therein. Moreover, compact schemes have also been developed for variable coefficient PDEs \cite{kalita2002class, Spotz01} in the literature. However, the stability analysis in \cite{kalita2002class} is performed using von Neumann approach by considering the coefficients as constants, while \cite{Spotz01} has not investigated the stability of the scheme. Note that, 
the applicability of the von Neumann stability analysis is restricted to the linear constant coefficient finite difference schemes (pp. 166 \cite{Tref96}). However, the investigation of spectral radius using matrix method for proving the stability of a numerical scheme 
is valid for variable coefficient problems also. The matrix method stability analysis becomes cumbersome for compact schemes, even in the constant coefficient case, as the amplification matrix cannot be expressed using a single tridiagonal matrix, unlike the case of FDM. Therefore, the common approach of establishing stability by studying the eigenvalues of the amplification matrix becomes challenging. Due to the intractability of the entries of the amplification matrix, locating the eigenvalues using theorems like Gershgorin's is not feasible. However, an alternative approach based on a difference equation has been proposed in \cite{mohebbi2010high} for a compact scheme for constant-coefficient convection-diffusion equations in one dimension. The difference equation is obtained from the eigenvalue problem of the amplification matrix. However, it is relevant to note that no progress has been made for multi-dimensional problems in this context. One dimensional problems with variable coefficients are even unaddressed in the existing literature.

In this paper, we extend the approach appearing in \cite{mohebbi2010high} to the variable coefficient generalization. The said extension is not straightforward and requires substantial novelty. Indeed the extension involves the derivation of a formula for the solution of the difference equation using mathematical induction. Thereby, despite the intractability of the entries of the amplification matrix, we succeeded in obtaining an expression of its characteristic polynomial. Using that polynomial, we derive a sufficient condition for unconditional stability. As per our knowledge, the present paper is the first instance of the stability analysis of a fully discrete compact scheme for a variable-coefficient convection-diffusion equation in the literature. 

In addition to the unconditional stability of the scheme, 
a highly accurate practical implementation needs good precision in the computation of the amplification matrix too. The related numerical error can be controlled by controlling the condition number of the matrix which needs to be inverted to get the amplification matrix. For this purpose, an understanding of the upper bound of the condition number becomes important. In this paper, the relevant condition number is shown to be of order $\mathcal{O} \left(\frac{\delta v}{{\delta z}^2}\right)$, where $\delta v$ and $\delta z$ are time and space step sizes respectively. The study of condition number related to the compact scheme is however less common in the literature. We refer to \cite{GoswamiKS} where a similar bound has been obtained in a different context. As per our knowledge, the derivation approach, adopted in the present paper, is a new addition to the literature.

This paper deviates from \cite{mohebbi2010high} in another crucial aspect. Here, the compact scheme, following \cite{Spotz01}, is considered to discretize the spatial dimension of the variable coefficient convection-diffusion equation. Further, for temporal discretization, the backward Euler, and Crank-Nicolson methods are considered which are different from the discretization techniques appearing in \cite{mohebbi2010high}. In \cite{mohebbi2010high}, the compact schemes are used for space discretization, and the cubic $C^1-$spline collocation method is applied for temporal discretization. 

\par The remainder of the paper is organized as follows: The compact scheme for variable coefficient convection-diffusion equation is presented in Sec. \ref{sec:model_problem} followed by its stability analysis. The stability of compact scheme for constant coefficient problem is proved in Sec. \ref{sec:stability}. The condition number and numerical results are given in Sec. \ref{sec:condition} and \ref{sec:numerical} respectively. The conclusion along with the future scope of proposed work are discussed in Sec.\ref{sec:conclu}.
\section{Stability analysis for variable coefficient problem}\label{sec:model_problem}
\noindent Let $a(z)$ and $b(z)$ be two bounded and twice continuously differentiable real-valued functions defined on finite open interval $(z_l, z_r),$ where $b(z)>\epsilon>0$,  $\: \forall \: z\in(z_l, z_r)$. Then, the variable coefficient convection-diffusion equation on $(z_l, z_r)$ can be written as follows:
\begin{equation}
\frac{\partial u}{\partial v}(v,z)+a(z)\frac{\partial u}{\partial z}(v,z)-b(z)\frac{\partial^2 u}{\partial z^2}(v,z)=0,\label{eq:oned_main_equation_2_v}
\end{equation}
where $z \in (z_l, z_r)$, and $0 \leq v \leq T,$ for some positive constant $T$. Moreover, we associate the following initial and boundary conditions with (\ref{eq:oned_main_equation_2_v})
\begin{align}
u(0,z)&=k(z), \quad z \in (z_l,z_r)\label{eq:initial_condition_1d_v},\\
u(v,z_l)&=h_1(v), \quad u(v,z_r)=h_2(v),\quad v> 0, \label{eq:oned_boun_v}
\end{align}
under the assumptions that $h_1$, and $h_2$ are smooth functions. Further, $h_1(0)=k(z_l)$, and $h_2(0)=k(z_r)$. Now, the compact scheme for variable coefficient convection-diffusion equation (\ref{eq:oned_main_equation_2_v}) is presented by following \cite{spotz1995high}. Consider the steady-state form of (\ref{eq:oned_main_equation_2_v}) as follows:
\begin{align}
    \label{Steady-state_v}
    a(z)\frac{\partial u}{\partial z}-b(z)\frac{\partial^2 u}{\partial z^2}=0.
\end{align}
The uniform grids are considered. Indeed we discretize $z$ as $z_i = z_l + i\delta z$, where $0 \leq i \leq N$  with constant step size $\delta z$. Here the natural number $N$ denotes the mesh size.
Standard central difference approximations are considered to discretize (\ref{Steady-state_v}) as follows:
\begin{equation}
\label{Truncation_error}
    a_i\Delta_{z} u_{i}-b_i\Delta_{zz} u_{i}-\tau_{i}=0,
\end{equation}
where $\Delta_{z}u_i$, and $\Delta_{zz}u_i$ are central difference operators in space of first and second order respectively. Here $u_i=u(z_i),\:a_i=a(z_i)$ and $b_i=b(z_i)$. Moreover, the truncation error $\tau_i$ is given by 
\begin{equation}
\label{tau_q_v}
\tau_i=\left[\frac{{\delta z}^2}{12}\left(2 a\frac{\partial^3 u}{\partial z^3}- b \frac{\partial^4 u}{\partial z^4}\right)(z_i)\right]_i+\mathcal{O}(\delta z^4).
\end{equation}
In order to accomplish fourth-order accuracy, let us take (\ref{Steady-state_v}) as an auxiliary equation and approximate $\frac{\partial^3 u}{\partial z^3}$ and $\frac{\partial^4 u}{\partial z^4}$ as follows:
$$
\begin{aligned}
\left.\frac{\partial^3 u}{\partial z^3}\right\vert_i =& \left[-\frac{1}{b_i}\Delta_{z}b_i\Delta_{zz}u_i+\frac{a_i}{b_i}\Delta_{zz}u_i+\frac{1}{b_i}\Delta_{z}a_i\Delta_{z}u_i\right]+\mathcal{O}({\delta z}^2),\\
\left.-b \frac{\partial^4 u}{\partial z^4}\right\vert_i =& \left[-\frac{2( \Delta_{z} b_i)^2}{b_i}+\frac{3a_i}{b_i}{\Delta_{z}b_i}-\frac{{a_i}^2}{b_i}+\Delta_{zz}b_i-2 \Delta_{z}a_i\right] \Delta_{zz}u_i\\
&+\left[(2\Delta_{z}b_i-a_i)\frac{\Delta_{z}a_i}{b_i}-\Delta_{zz}a_i\right]\Delta_{z}u_i+\mathcal{O}({\delta z}^2).
\end{aligned}
$$
Using the above expressions and relations (\ref{Truncation_error})-(\ref{tau_q_v}), fourth-order accurate approximation of steady state equation (\ref{Steady-state_v}) is given as follows:
\begin{align}
    \label{Steady_state_discr}
    \zeta_i\Delta_{z}u_i-\alpha_i \Delta_{zz}u_i+\mathcal{O}({\delta z}^4)=0,
\end{align}
where 
$$
\begin{aligned}
  \zeta_i &=\left[a_i-\frac{\delta {z^2}}{12}\left(\frac{a_i}{b_i}\Delta _z a_i+\frac{2}{b_i}\Delta_z a_i\Delta_z b_i- \Delta_{zz} a_i\right)\right],\\
  \alpha_i&=\left[ b_i+\frac{\delta {z^2}}{12}\left(\frac{a_i}{b_i}\Delta_z b_i-2\Delta_z a_i+\Delta_{zz} b_i-\frac{2}{b_i}(\Delta_z b_i)^2+\frac{a_i^2}{b_i}\right)\right].
\end{aligned}
$$
Let $u_{i}^{m}$ denote the solution of (\ref{eq:oned_main_equation_2_v}) at time grid point $v^{m}= m \delta v, m = 0, 1, \dots , M$ and space mesh point $z_i$, and $\Delta_{v}^{+}u^m_i=\frac{u^{m+1}_i-u^m_i}{\delta v}$ for constant time step $\delta v$. Using relation (\ref{Steady_state_discr}), the fully-discrete problem for the unsteady equation (\ref{eq:oned_main_equation_2_v}) can be written as
\begin{align}
    \left[1-\frac{{\delta z}^2}{12}\left(\frac{a_i}{b_i}+\frac{2\Delta_{z}b_i}{b_i}\right)\Delta_{z}+\frac{{\delta z}^2}{12}\Delta_{zz}\right]\Delta_{v}^{+}u_{i}^{m} +\zeta_i\Delta_{z}u_{i}^{m}-\alpha_i \Delta_{zz}u_{i}^{m}+\mathcal{O}({\delta z}^4)=0.
\end{align}
The expressions for $\Delta_{z}u_{i}^{m}$ and $\Delta_{zz}u_{i}^{m}$ are given as
\begin{equation}\label{eq:firstf_2_v}
 \Delta_{z}u^m_i=\frac{u^m_{i+1}-u^m_{i-1}}{2\delta z},\:\: \Delta_{zz}u^m_{i}=\frac{u^m_{i+1}-2u^m_{i}+u^m_{i-1}}{{\delta z}^2}.
\end{equation} 
Let $\gamma_i =\left( \frac{a_i}{b_i}+\frac{2}{b_i}\Delta_z b_i\right)$, then fully discrete problem for (\ref{eq:oned_main_equation_2_v}) can be written as follows: 
\begin{align}\label{eq:oned_fully_discrete_compact_main_v}
\Delta_v^+ u_i^m&-\frac{{\delta z}^2}{12}\left(\gamma_i\Delta_z\Delta_v^+ u_i^m-\Delta_{zz}\Delta_v^+ u_i^m\right)+(1-\theta)\left(\zeta_i\Delta_z u_i^m-\alpha_i\Delta_{zz} u_i^m\right)\nonumber \\
&+\theta \left(\zeta_i\Delta_z u_i^{m+1}-\alpha_i\Delta_{zz} u_i^{m+1}\right)=\mathcal{O}(\{\delta v\}^{\varepsilon(\theta)}, \{\delta z\}^4),
\end{align}
for $1 \leq i \leq N-1,$ and $m=0,1, \dots,M-1$. The order $\varepsilon(\theta)$ in the error term on the right side of (\ref{eq:oned_fully_discrete_compact_main_v}) depends on $\theta$. The Crank-Nicolson, and backward Euler time discretization appears for $\theta=\frac{1}{2}$, and $1$ respectively. The order of accuracy in time is $\varepsilon(\theta)=2$ for $\theta = 1/2$, and $\varepsilon(\theta)=1$ for $\theta =1$. 
If $U^m_i$ denotes the approximate value of $u^m_i$, then (\ref{eq:oned_fully_discrete_compact_main_v}) can be rewritten for $1 \leq i \leq N-1,$ and $m\geq0$ as follows:
\begin{align}\label{eq:oned_fully_discrete_compact1_v}
\Delta_v^+ U_i^m&-\frac{{\delta z}^2}{12}\left(\gamma_i\Delta_z\Delta_v^+ U_i^m-\Delta_{zz}\Delta_v^+ U_i^m\right)+(1-\theta)\left(\zeta_i\Delta_z U_i^m-\alpha_i\Delta_{zz} U_i^m\right)\nonumber \\
&+\theta \left(\zeta_i\Delta_z U_i^{m+1}-\alpha_i\Delta_{zz} U_i^{m+1}\right)=0,
\end{align}
with $U^m_0=h_1(v_m)$, and $U^m_N=h_2(v_m)$, for all $m\geq0$. Next, we wish to rewrite (\ref{eq:oned_fully_discrete_compact1_v}) by plugging in the expressions of difference operators, appearing in (\ref{eq:firstf_2_v}). To this end, we denote the column vector $U^m=[U^m_1,U^m_2,...,U^m_{N-1}],$ $\forall$ $m \geq 0$, and define the constants depending on $\delta z$ and $\delta v$ as follows:
\begin{align}
  p_{i}=\frac{2+\delta z \gamma_i}{24\delta v}, \: q_{i}=\frac{5}{6 \delta v}, \: \text{and} \: r_i=\frac{2-\delta z \gamma_i}{24\delta v}, \:\forall\: i=1,2,\dots,N-1.\:\label{eq:c_v}
\end{align}
\begin{align}
l_{i}=-\frac{\zeta_i}{2\delta z}-\frac{\alpha_i}{{\delta z}^2}, \: m_{i}=\frac{2\alpha_i}{{\delta z}^2},\: \text{and} \: 
n_i=\frac{\zeta_i}{2\delta z}-\frac{\alpha_i}{{\delta z}^2},\:\forall \:i=1,2,\dots,N-1.\label{eq:y_v}
\end{align}
Then Eq. (\ref{eq:oned_fully_discrete_compact1_v}) can be written as 
\begin{equation}\label{eq:oned_fully_discrete_compact_v}
[(1-\theta)X+\theta(X+Y)]U^{m+1}=[\theta X+ (1-\theta)(X-Y)]U^m+F^m,
\end{equation}
where $X$ and $Y$ are the following tridiagonal matrices of order $(N-1)\times(N-1)$:
\begin{equation}
\label{eq_matrix_x_v}
X\vcentcolon =\left[ 
\begin{array}{c c c c c c c c c} 
  q_1 & r_1 & & & \cdots & & & &\\ 
  p_2 & q_2 & r_2 & & \cdots & & & &\\ & p_3 & q_3 & r_3 &\cdots & & & & \\& & & & \vdots & & & &\\
 & & & & \cdots & p_{N-3} & q_{N-3} & r_{N-3} & \\ 
& & & & \cdots & & p_{N-2} & q_{N-2} & r_{N-2} \\ 
 & & & & \cdots & & & p_{N-1} & q_{N-1}
\end{array} 
\right], \quad
\end{equation}
\begin{equation}
\label{eq_matrix_y_v}
Y\vcentcolon =\left[ 
\begin{array}{c c c c c c c c c} 
 m_1 & n_1 & & & \cdots & & & &\\ 
 l_2 & m_2 & n_2 & & \cdots & & & &\\ 
 & l_3 & m_3 & n_3 &\cdots & & & &\\ 
 & & & & \vdots & & & &\\
 & & & & \cdots & l_{N-3} & m_{N-3} & n_{N-3} & \\ 
 & & & & \cdots & & l_{N-2} & m_{N-2} & n_{N-2} \\ 
 & & & & \cdots & & & l_{N-1} & m_{N-1}
\end{array} 
\right],
\end{equation}
and $F^m$ is a $N-1$ dimensional column vector consists of boundary values. Its expression is given for each $\theta$ in the following subsections. 
\subsection{Backward Euler compact scheme}\label{ssec:backward_euler}
Taking $\theta=1$ in (\ref{eq:oned_fully_discrete_compact_v}), the fully discrete backward Euler compact scheme can be written as:
\begin{equation}\label{eq:xy_B_v}
(X+Y) U^{m+1}=XU^m+F^m, 
\end{equation}
where $X$ and $Y$ are given in (\ref{eq_matrix_x_v}), and (\ref{eq_matrix_y_v}) respectively. Moreover 
$$F^m=[p_1h_1(v^m)-(p_1+l_{1})h_1(v^{m+1}),0,\dots,0,r_{N-1}h_2(v^m)-(r_{N-1}+n_{N-1})h_2(v^{m+1})].$$
$X$ is diagonally dominant for sufficiently small $\delta z$ due to the boundedness of $a$, $b$ and $b^{-1}$ on the domain. Hence $X$ is invertible. Using $W=X^{-1}Y$ in (\ref{eq:xy_B_v}), we get
\begin{equation}\label{eq:w_B_v}
(I+W)U^{m+1}=U^{m}+X^{-1}F^m.
\end{equation}
The stability of (\ref{eq:w_B_v}), 
is asserted in  Theorem \ref{theorem:stability_B_v}. To establish that we need to study the eigenvalues of $W$.
\begin{remark}
To locate the eigenvalues of matrix $W$, we aim to derive the characteristic polynomial. Note that the entries of $W$ are not tractable since $W$ involves $X^{-1}$. Hence the determinant approach for characteristic polynomial is not applicable. Using the tridiagonal structure of matrices $X$ and $Y$, we propose a difference equation approach for deriving the characteristic polynomial of $W$.
\end{remark}
\noindent 
Let $\beta=(\beta_{1}, \beta_{2}, \ldots, \beta_{N-1})\in \mathbb{C}^{N-1}$ be an eigenvector of $W$ corresponding to an eigenvalue $\lambda\in \mathbb{C}$. So we have $X^{-1} Y \beta=\lambda \beta$ or $(Y - \lambda X) \beta =0$. This linear system in $\beta$ can be written explicitly as
\begin{align}\label{eq:sys_eq}
\left.\begin{array}{rl}
\left(m_1-\lambda q_1\right) \beta_1+\left(n_1-\lambda r_1\right)\beta_2&=0,\\
\left(l_j-\lambda p_j\right)\beta_{j-1}+\left(m_j-\lambda q_j\right) \beta_j+\left(n_j-\lambda r_j\right)\beta_{j+1}&=0,\quad
j=2, \ldots, N-2,\\
\left(l_{N-1}-\lambda p_{N-1}\right)\beta_{N-2}+\left(m_{N-1}-\lambda q_{N-1}\right) \beta_{N-1}&=0.
\end{array}\right\}
\end{align}
Then the above system can be rewritten as
\begin{equation}\label{linear_system_FB_v}
C_j\beta_{j-1}+B_j \beta_j+A_j\beta_{j+1}=0,\:\forall\:j=1,2,\dots,N-1,
\end{equation}
with following constraints on newly added extra variables $\beta_0$, $\beta_N$ as
\begin{align}\label{constraint}
\beta_{0}=\beta_{N}=0,
\end{align}
where for $j=1,\ldots, N-1$
\begin{equation}\label{eq:ajbjcj}
C_j=l_j-\lambda p_j,\:B_j=m_j-\lambda q_j, \: \text{and}\: A_{j}=n_{j}-\lambda r_{j}.
\end{equation}

\noindent Here we present an expression of the general solution of \eqref{linear_system_FB_v} in the following Lemma.
\begin{lemma}\label{lem2}
For any fixed $\lambda\in \mathbb{C}$, and $N\ge 2$, let the vector $x= (x_0,x_1,\dots,x_N)\in \mathbb{C}^{N+1}$ solve \eqref{linear_system_FB_v} with $x_0=1$. Then
\begin{align}\label{lemeq}
\left( \prod_{j=1}^{N-1}A_j \right) x_N=\sum_{s\in\mathscr{A}_N}s_0(s_1,x_1)\prod_{j=1}^{N-1}s_j
\end{align}
where $\mathscr{A}_N$ is a collection of finite sequences $s:=\{s_j\}_{j=1}^{N-1}$ such that
\begin{align}\label{eq:A}
\nonumber 
s_{N-1} & \in \left\{-B_{N-1}, -C_{N-1}\right\}\\
s_{j-1} & \in \left\{
\begin{array}{ll}
\{A_{j-1}\} &\textrm{ if } s_j=-C_j,\\
\{-B_{j-1}, -C_{j-1}\} &\textrm{ if }  s_j\in\{A_j,-B_j\}
\end{array}\right. \forall \:\:  2\le j < N,
\end{align}
and 
$$ s_0(s_1,x_1) := \left\{
\begin{array}{ll}
1 &\textrm{ if } s_1=-C_1,\\
x_1 &\textrm{ if }  s_1\in\{A_1,-B_1\}.
\end{array}\right. 
$$
\end{lemma}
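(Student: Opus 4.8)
The plan is to prove Lemma \ref{lem2}, i.e.\ the identity \eqref{lemeq}, by induction on $N$, peeling off the last equation (the one for $j=N-1$) of the difference system \eqref{linear_system_FB_v} at each step. A point worth fixing at the outset: throughout, I will only ever use the bare three-term relation $C_j x_{j-1}+B_j x_j+A_j x_{j+1}=0$ together with $x_0=1$, and never divide by an $A_j$. Consequently \eqref{lemeq} will emerge as a genuine polynomial identity in the entries $A_j,B_j,C_j$ and in $x_1$, valid whether or not some $A_j$ vanishes, with empty products read as $1$.

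For the base case $N=2$ the only equation in \eqref{linear_system_FB_v} is $C_1 x_0+B_1 x_1+A_1 x_2=0$, i.e.\ $A_1 x_2=(-C_1)\cdot 1+(-B_1)x_1$ after using $x_0=1$. On the other side, $\mathscr{A}_2$ consists precisely of the two one-term sequences with $s_1=-C_1$ and with $s_1=-B_1$, on which $s_0(s_1,x_1)$ equals $1$ and $x_1$ respectively, so the right-hand side of \eqref{lemeq} is $1\cdot(-C_1)+x_1\cdot(-B_1)$. The two sides coincide.

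For the inductive step I fix $N\ge 3$ and assume the statement for all smaller parameters, with the understanding that for parameter $1$ it degenerates to the trivial identity $x_1=x_1$. The $j=N-1$ equation of \eqref{linear_system_FB_v} reads $A_{N-1}x_N=-B_{N-1}x_{N-1}-C_{N-1}x_{N-2}$; multiplying through by $\prod_{j=1}^{N-2}A_j$ and extracting an $A_{N-2}$ from the last summand gives
\[
\Bigl(\prod_{j=1}^{N-1}A_j\Bigr)x_N=(-B_{N-1})\Bigl(\prod_{j=1}^{N-2}A_j\Bigr)x_{N-1}+(-C_{N-1})\,A_{N-2}\Bigl(\prod_{j=1}^{N-3}A_j\Bigr)x_{N-2}.
\]
Since $(x_0,\dots,x_{N-1})$ solves the recurrence \eqref{linear_system_FB_v} for the parameter $N-1$ and $(x_0,\dots,x_{N-2})$ for the parameter $N-2$, the induction hypothesis turns each of the two products on the right into a sum over $\mathscr{A}_{N-1}$, resp.\ $\mathscr{A}_{N-2}$. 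It then remains to recognize the resulting expression as $\sum_{s\in\mathscr{A}_N}s_0(s_1,x_1)\prod_{j=1}^{N-1}s_j$, which I obtain by splitting $\mathscr{A}_N$ according to the value of $s_{N-1}$: if $s_{N-1}=-B_{N-1}$, then rule \eqref{eq:A} lets the truncation $(s_1,\dots,s_{N-2})$ range over precisely $\mathscr{A}_{N-1}$ (with the formula for $s_0$ untouched, since it depends only on $s_1$), reproducing the first summand; if $s_{N-1}=-C_{N-1}$, then \eqref{eq:A} forces $s_{N-2}=A_{N-2}$, after which $(s_1,\dots,s_{N-3})$ ranges over precisely $\mathscr{A}_{N-2}$, and the extracted factor $(-C_{N-1})A_{N-2}$ reproduces the second summand. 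Adding the two pieces closes the induction.

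The \emph{main obstacle} is the combinatorial bookkeeping in this last step: one must verify carefully that, once $s_{N-1}$ is fixed (and, in the second case, the forced value of $s_{N-2}$), the recursive description \eqref{eq:A} of $\mathscr{A}_N$ restricts to \emph{exactly} the description of $\mathscr{A}_{N-1}$ (resp.\ $\mathscr{A}_{N-2}$) — no admissibility constraint being created or destroyed at the junction index, and the rule for $s_0$ being preserved — and one must treat the small-$N$ junction, where the $\mathscr{A}_{N-2}$-sum collapses to $x_1=x_1$, with the empty-product convention. As a sanity check on this bookkeeping, the same identity also falls out of a transfer-matrix picture: $\bigl(\prod_{j=1}^{N-1}A_j\bigr)x_N$ is the top component of $\widetilde M_{N-1}\widetilde M_{N-2}\cdots\widetilde M_1\binom{x_1}{x_0}$ with $\widetilde M_j:=\left(\begin{smallmatrix}-B_j&-C_j\\ A_j&0\end{smallmatrix}\right)$, and expanding this product over all index paths produces exactly the sum over $\mathscr{A}_N$, the vanishing $(2,2)$-entry of $\widetilde M_j$ being precisely what produces the forced branch ``$s_{j-1}=A_{j-1}$ if $s_j=-C_j$'' in \eqref{eq:A}.
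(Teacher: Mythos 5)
Your proposal is correct and follows essentially the same route as the paper's proof: induction on $N$, using the last three-term relation to write $\bigl(\prod_{j=1}^{N-1}A_j\bigr)x_N$ in terms of the two shorter weighted sums and then identifying these with the split of $\mathscr{A}_N$ according to whether $s_{N-1}=-B_{N-1}$ or $s_{N-1}=-C_{N-1}$. Your explicit handling of the degenerate $N=3$ junction (where the shorter sum collapses to $x_1$) and the transfer-matrix sanity check are harmless additions beyond what the paper records.
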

\begin{proof}
Set $N=2$. Then left side of \eqref{lemeq} is $A_1x_2$. Furthermore, $\mathscr{A}_2= \{ \{s_1\} : s_1 \in \{-B_1, -C_1\}\}$. Thus using the definition  of $s_0(s_1,x_1)$, the right side of \eqref{lemeq} is 
$$\sum_{s\in\mathscr{A}_2}s_0(s_1,x_1)s_1 = x_1.(-B_1) + 1.(-C_1).$$
Also \eqref{linear_system_FB_v} and $x_0=1$ give $A_1x_2=-B_1x_1 -C_1$. Hence, both sides of \eqref{lemeq} are equal for $N=2$. Next we show that \eqref{lemeq} holds for $N=N'+1$ by assuming that \eqref{lemeq} holds for $N=2,\ldots, N'$.
From \eqref{linear_system_FB_v} and the induction hypothesis we have 
\begin{align}\label{iteration}
\nonumber A_{N'} x_{N'+1} =& -C_{N'} x_{N'-1} -B_{N'} x_{N'} \\
\left(\prod_{j=1}^{N'}A_j\right) x_{N'+1}=&-C_{N'} A_{N'-1} \left(\prod_{j=1}^{N'-2}A_j \right) x_{N'-1} - B_{N'} \left(\prod_{j=1}^{N'-1}A_j \right) x_{N'}\\
\nonumber =&  -C_{N'} A_{N'-1} 
\sum_{s\in\mathscr{A}_{N'-1}}s_0(s_1,x_1)\prod_{j=1}^{N'-2}s_j -  B_{N'} \sum_{s\in\mathscr{A}_{N'}}s_0(s_1,x_1)\prod_{j=1}^{N'-1}s_j\\
\nonumber =& \sum_{ \{s\in \mathscr{A}_{N'+1} : s_{N'}= -C_{N'} \}} s_0(s_1,x_1)\prod_{j=1}^{N'}s_j + \sum_{ \{s\in \mathscr{A}_{N'+1}: s_{N'}=-  B_{N'}\}} s_0(s_1,x_1)\prod_{j=1}^{N'}s_j\\
\nonumber =& \sum_{s\in \mathscr{A}_{N'+1}} s_0(s_1,x_1)\prod_{j=1}^{N'}s_j.
\end{align}
Thus the proof is complete.
\end{proof}

\noindent We introduce the following notations
\begin{equation}\label{eq:d1d2}
D_N^{1}=\sum_{\left\{s\in\mathscr{A}_N|s_1\neq -C_1\right\}}\prod_{j=1}^{N-1}s_j,\quad D_N^{2}=\sum_{\left\{s\in\mathscr{A}_N|s_1= -C_1\right\}}\prod_{j=1}^{N-1}s_j.
\end{equation}
Since $s_j$ is a first degree polynomial in $\lambda$ for each $j$, $D_{N}^{1}$  and $D_{N}^{2}$ are polynomials in $\lambda$ of degree atmost $N-1$. Using the above notation, (\ref{lemeq}) may be rewritten as 

\begin{align}\label{lemeq1}
\left( \prod_{j=1}^{N-1}A_j \right) x_N= D^1_N x_1+D^2_N.
\end{align}

\begin{theorem}\label{theorem:real part_v}
Let $\lambda$ be an eigenvalue of matrix $W$, then $\lambda$ solves $D_{N}^{1}=0$, where $D_N^1$ is given in (\ref{eq:d1d2}).
\end{theorem}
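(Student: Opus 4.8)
\noindent The plan is to recover $D_N^1(\lambda)=0$ directly from the eigenvalue equation by padding an eigenvector of $W$ with zeros and feeding it into Lemma~\ref{lem2}. I would start from an eigenvalue $\lambda$ of $W$ together with an eigenvector $\beta=(\beta_1,\dots,\beta_{N-1})\neq 0$; as recorded in \eqref{eq:sys_eq} and \eqref{constraint}, once we put $\beta_0=\beta_N=0$ the padded vector $(\beta_0,\dots,\beta_N)$ solves the recurrence \eqref{linear_system_FB_v}. The single discrepancy with Lemma~\ref{lem2} is its normalization $x_0=1$, whereas here $x_0=\beta_0=0$. So the first step is to promote \eqref{lemeq1} to the homogeneous identity
\[
\left(\prod_{j=1}^{N-1}A_j\right)x_N \;=\; D_N^1\,x_1 + D_N^2\,x_0 ,
\]
valid for every solution $x=(x_0,\dots,x_N)$ of \eqref{linear_system_FB_v}. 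I would establish this by re-running the induction in the proof of Lemma~\ref{lem2} verbatim, keeping $x_0$ as a free symbol: the only change is in the base case $N=2$, where \eqref{linear_system_FB_v} now reads $A_1x_2=-B_1x_1-C_1x_0$, already displaying $D_2^2=-C_1$ as the coefficient of $x_0$; the induction step is purely formal and never divides by any $A_j$, so the identity holds with no restriction on the $A_j$.

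Granting this, the conclusion is short: evaluating the homogeneous identity at $x=(\beta_0,\dots,\beta_N)$ and using $\beta_0=\beta_N=0$ collapses it to $D_N^1(\lambda)\,\beta_1=0$, so it remains to check $\beta_1\neq 0$. For that I would read \eqref{linear_system_FB_v} forward: if $\beta_0=\beta_1=0$, then $\beta_{j+1}=-(C_j\beta_{j-1}+B_j\beta_j)/A_j$ forces $\beta_2=0$, then $\beta_3=0$, and inductively $\beta\equiv 0$, contradicting that $\beta$ is an eigenvector. Hence $\beta_1\neq 0$ and $D_N^1(\lambda)=0$. (A similar argument, starting instead from the solution with $\beta_0=0$, $\beta_1=1$, gives the converse, so the roots of $D_N^1$ are exactly the eigenvalues of $W$; this is what the stability argument will exploit.)

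The step I expect to be the real obstacle is the forward-propagation argument used to rule out $\beta_1=0$: it divides by the super-diagonal coefficients $A_j=n_j-\lambda r_j$ and therefore tacitly assumes each $A_j\neq 0$. In the setting at hand this is harmless --- for $\delta z$ small enough $r_j\neq 0$, and the eigenvalues of $W$ avoid the finitely many exceptional values $n_j/r_j$ --- so I would either invoke that or impose it as a standing hypothesis. To cover the degenerate case in full generality one would instead split the tridiagonal system $(Y-\lambda X)\beta=0$ into the blocks separated by the vanishing super-diagonals and run the argument on each block; that bookkeeping is the only genuinely technical part.
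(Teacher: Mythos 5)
Your proposal is correct in substance, and it reaches the conclusion by a route that differs from the paper's in one step. The paper keeps Lemma \ref{lem2} exactly as stated (normalization $x_0=1$), applies it to two independent solutions $\beta'$, $\beta''$ of \eqref{linear_system_FB_v} normalized as in \eqref{eq:beta_prime}, represents the padded eigenvector as the combination \eqref{betaj}, and uses $\beta_0=\beta_N=0$ to force $\beta'_N=\beta''_N$; subtracting the two identities \eqref{eq:betannprime} then gives $D^1_N(\beta'_1-\beta''_1)=0$, hence $D^1_N=0$, without ever asking whether $\beta_1$ vanishes. You instead upgrade Lemma \ref{lem2} to the homogeneous identity $\left(\prod_{j=1}^{N-1}A_j\right)x_N=D^1_N x_1+D^2_N x_0$ (which is indeed immediate from the same induction, since no step divides by any $A_j$), evaluate it on the padded eigenvector to get $D^1_N\,\beta_1=0$, and then must rule out $\beta_1=0$ by forward propagation. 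Your homogeneous identity is arguably cleaner and removes the two-solution bookkeeping; the paper's subtraction trick avoids the question of whether $\beta_1=0$, at the price of needing the eigenvector solution to lie in the span of $\beta'$ and $\beta''$.

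The one soft spot is the step you yourself flag: forward propagation divides by $A_j=n_j-\lambda r_j$, so your argument as written needs $A_j\neq0$ for the relevant $j$. Your suggested shortcut that the eigenvalues of $W$ ``avoid the finitely many exceptional values $n_j/r_j$'' is not justified and should not be invoked as a fact; either state it as a hypothesis, carry out the block-splitting you sketch, or better, note that $D^1_N$ obeys the recursion $D^1_{k+1}=-B_kD^1_k-C_kA_{k-1}D^1_{k-1}$, which identifies it (up to sign) with $\det(Y-\lambda X)$, so the conclusion holds with no nondegeneracy assumption at all. In fairness, the paper's own proof carries the same implicit restriction: asserting that the eigenvector solution has the form \eqref{betaj}, with $\beta'_0=\beta''_0=1$ and $\beta'_1\neq\beta''_1$, tacitly uses that the solution space of \eqref{linear_system_FB_v} is exactly two-dimensional and parametrized by $(x_0,x_1)$, which again relies on the $A_j$ not vanishing. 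So your argument is on the same footing as the paper's, and you are more explicit about the caveat.
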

\begin{proof}
For any fixed $\lambda\in\mathbb{C}$, the general solutions of (\ref{linear_system_FB_v}) for the unknown vector $(\beta_0,\beta_1,\dots,\beta_N)$ form a linear space of dimension at least $2$ (Theorem 2.21 \cite{SElaydi05}),  when no constraints like \eqref{constraint} are imposed. If $\beta^{'}$ and $\beta^{''}$ denote two independent solutions to (\ref{linear_system_FB_v}), then a linear combination 
\begin{equation}\label{betaj}
\beta_{j}=K_1\beta_{j}^{'}+K_2\beta_{j}^{''},\:\:\forall\:j=0,1,\dots,N,
\end{equation}
also solves (\ref{linear_system_FB_v}) for any pair of scalars $K_1,K_2$. Due to the independence of $\beta'$ and $\beta''$, without loss of generality we may take 
\begin{equation}\label{eq:beta_prime}
\beta_{0}^{'}=\beta_{0}^{''}=1 \quad \text{ and} \quad \beta_{1}^{'} \neq \beta_{1}^{''}.
\end{equation}
Thus none of $\beta^{'}$ and $\beta^{''}$ satisfy \eqref{constraint}. Hence they do not solve \eqref{eq:sys_eq}. However, a linear combination of $\beta^{'}$ and $\beta^{''}$ for a specific value of $\lambda$ may satisfy \eqref{eq:sys_eq}. Indeed, we would find a necessary condition on $\lambda$ so that there is a linear combination of  $\beta^{'}$ and $\beta^{''}$  which solves \eqref{eq:sys_eq}.

\noindent Since $\beta'$ and $\beta''$ satisfy
\eqref{linear_system_FB_v} and $\beta'_0=\beta''_0=1$, Lemma \ref{lem2} is applicable. Hence from (\ref{lemeq1}), we get

\begin{equation}\label{eq:betannprime}
\left(\prod_{j=1}^{N-1}A_j\right)\beta_N^{'}=D^1_N\beta'_1+D^2_N, \quad \quad \left(\prod_{j=1}^{N-1}A_j\right)\beta_N^{''}=D^1_N\beta''_1+D^2_N.
\end{equation}

\noindent Now we recall that if $\lambda\in \mathbb{C}$ is an eigenvalue of $W$, then there is a non-trivial solution $(\beta_0, \ldots, \beta_N)$ of the system \eqref{linear_system_FB_v}-(\ref{constraint}) having  the form (\ref{betaj}). The condition $\beta_0=0$ in (\ref{betaj}) implies $K_1=-K_2$. Additionally using $\beta_N=0$, (\ref{betaj}) gives $K_1(\beta_{N}^{'}-\beta_{N}^{''}) =0$, i.e., $\beta_{N}^{'}=\beta_{N}^{''}$. Consequently from (\ref{eq:betannprime}), we get $D_{N}^{1}\beta_1^{'}+D_{N}^{2}=D_{N}^{1}\beta_1^{''}+D_{N}^{2}.$ Hence $D_N^{1}(\beta_1^{'}-\beta_1^{''})=0.$ Thus, using (\ref{eq:beta_prime}), we have $D_{N}^{1}=0$. Thus $\lambda$ solves $D_{N}^{1}=0$. 
\end{proof}


\begin{remark} An algorithm may easily be developed for symbolic computation of $\left(\prod_{j=1}^{N-1}A_j\right)\beta^{'}_N$ for general $N$, from the recurrence relation (\ref{linear_system_FB_v}). Then using (\ref{eq:betannprime}), the coefficient of $\beta'_1$, i.e. $D^1_N$, can be obtained and its roots can be computed subsequently. To illustrate this, three MATLAB algorithms are provided in Appendix A. There, the algorithm in Section \ref{ssec:1} computes the symbolic expressions for $D^1_N$. Further, the function in Section \ref{ssec:2} gives the symbolic/numerical values for the roots of $D^1_N$. Finally, both of these functions are called in another algorithm in Section \ref{ssec:3}. The algorithm in Section \ref{ssec:3} makes use of the following theorem (Theorem \ref{theorem:stability_B_v}) to conclude the stability of the backward Euler compact scheme (\ref{eq:w_B_v}) under the specific choices of parameters entered by a user.
\end{remark}

\begin{theorem}\label{theorem:stability_B_v}
If the real part of each root of polynomial $D^1_N$ is positive, then the backward Euler compact scheme (\ref{eq:w_B_v}) is stable. 
\end{theorem}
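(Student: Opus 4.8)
The plan is to identify the amplification matrix of \eqref{eq:w_B_v} and reduce stability to a spectral statement that Theorem \ref{theorem:real part_v} already delivers. Rewriting \eqref{eq:w_B_v} as $U^{m+1}=G\,U^{m}+g^{m}$ with $G:=(I+W)^{-1}$ and $g^{m}:=(I+W)^{-1}X^{-1}F^{m}$, and unrolling the recursion, gives $U^{m}=G^{m}U^{0}+\sum_{k=1}^{m}G^{m-k}g^{k-1}$. Hence it suffices to check three things: that $G$ is well defined, that $\sup_{m\ge 0}\|G^{m}\|<\infty$, and that $\|g^{m}\|$ is bounded uniformly in $m$. The last point is routine: $F^{m}$ depends only on the fixed scalars $p_1,l_1,r_{N-1},n_{N-1}$ and on the values $h_1(v^{m}),h_1(v^{m+1}),h_2(v^{m}),h_2(v^{m+1})$ of the smooth (hence bounded) boundary data, and $X^{-1}$ exists by the diagonal dominance argument already noted before \eqref{eq:w_B_v}, so $\|g^{m}\|\le \|(I+W)^{-1}\|\,\|X^{-1}\|\,\|F^{m}\|$ is bounded.

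Next I would compute the spectrum of $G$. If $\lambda$ is an eigenvalue of $W=X^{-1}Y$ with eigenvector $\beta$, then $(Y-\lambda X)\beta=0$, which is exactly the system \eqref{linear_system_FB_v}--\eqref{constraint}; therefore, by Theorem \ref{theorem:real part_v}, $\lambda$ is a root of $D^{1}_{N}$, and so $\operatorname{Re}\lambda>0$ by hypothesis. In particular $\lambda\neq-1$ for every eigenvalue of $W$, which shows $I+W$ is invertible and $G$ is well defined. Moreover every eigenvalue of $G$ has the form $\mu=\frac{1}{1+\lambda}$ with $\operatorname{Re}\lambda>0$, and the elementary identity $|1+\lambda|^{2}=(1+\operatorname{Re}\lambda)^{2}+(\operatorname{Im}\lambda)^{2}\ge(1+\operatorname{Re}\lambda)^{2}>1$ gives $|\mu|<1$. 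Hence $\rho(G)<1$.

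Finally I would upgrade $\rho(G)<1$ to a bound on the iterated powers. By Gelfand's spectral radius formula $\lim_{m\to\infty}\|G^{m}\|^{1/m}=\rho(G)<1$, so fixing any $r$ with $\rho(G)<r<1$ yields $\|G^{m}\|\le C_{0}r^{m}$ for some $C_{0}$, whence $\sup_{m\ge 0}\|G^{m}\|<\infty$ and in fact $\sum_{m\ge 0}\|G^{m}\|<\infty$. Combined with the uniform bound on $g^{m}$, this gives $\|U^{m}\|\le C\bigl(\|U^{0}\|+\sup_{k}\|g^{k}\|\bigr)$ with $C$ independent of $m$, which is the claimed stability. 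The \emph{main obstacle} has in fact already been cleared by Theorem \ref{theorem:real part_v}; the point that still needs care here is that $G=(I+W)^{-1}$ is generally non‑normal, so one may not equate $\|G^{m}\|$ with $\rho(G)^{m}$ — it is the strict inequality $\rho(G)<1$, rather than $\rho(G)\le 1$, that rescues the argument. If one wanted the constant $C$ to be uniform in the mesh parameter $N$, one would additionally need an $N$‑independent lower bound on $\operatorname{Re}\lambda$ over the roots of $D^{1}_{N}$ (or an $N$‑independent bound on the condition number of the eigenbasis of $W$), which is not assumed; accordingly I would read the statement as stability for each fixed $N$.
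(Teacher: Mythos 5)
Your proposal follows essentially the same route as the paper's proof: write the scheme as $U^{m+1}=(I+W)^{-1}U^m+(X+Y)^{-1}F^m$, invoke Theorem \ref{theorem:real part_v} to identify the eigenvalues of $W$ with the roots of $D^1_N$, and use $\operatorname{Re}\lambda>0$ to conclude $\bigl|\tfrac{1}{1+\lambda}\bigr|<1$ for every eigenvalue of the amplification matrix. The extra steps you supply (boundedness of the forcing term, upgrading $\rho(G)<1$ to power-boundedness via Gelfand's formula, and the caveat about non-normality and $N$-uniformity) are correct refinements that the paper leaves implicit, but the core argument is identical.
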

\begin{proof}
From Theorem \ref{theorem:real part_v}, the eigenvalues of $W$ are the roots of $D_N^1$. Rewriting the equation \eqref{eq:w_B_v} as $$U^{m+1}=HU^{m}+(X+Y)^{-1}F^m,$$ where $H=(I+W)^{-1}$.
If $\lambda$ and $\beta$ is a pair of eigenvalue and eigenvector of $W$, we have $
H\beta=\frac{1}{1+\lambda}\beta$. Consequently, $\frac{1}{1+\lambda}$ is an eigenvalue of $H$. Hence, backward Euler compact scheme (\ref{eq:w_B_v}) is stable if we have 
$\frac{1}{|1+\lambda|} <1$ for each $\lambda$. The above inequality follows as real part of each $\lambda$ is assumed to be positive.
\end{proof}

\begin{remark}
Note that, the assertion in Theorem \ref{theorem:stability_B_v} holds under certain assumptions. 
In Table \ref{tab:eigen_test}, we showed with specific choices of parameters that all the roots of $D^1_N$ have a positive real part, hence it is not an unrealistic assumption.
\end{remark}

\begin{remark}
It is interesting to note that the number of elements in $\mathscr{A}_N$ is $F_{N+1}$, where $F_N$ is the $N^{th}$ Fibonacci number. Using a rough upper bound on the growth of the Fibonacci series, the growth of complexity of computing $D^1_N$, directly from \eqref{eq:d1d2} can be estimated. That estimated complexity turns out to be dominated by $\mathcal{O}(N2^N)$. On the other hand, the other approach of deriving a characteristic polynomial involves the computation of the determinant, having a complexity of order factorial of $N$, according to the Laplace expansion. As $N!$ grows faster than  $N2^N$, and dominates for $N\ge 6$, computation of $D^1_N$  from \eqref{eq:d1d2} appears advantageous. However, there are smarter algorithms for computing determinants with cubic complexity. Similarly, an iterative use of \eqref{iteration} yields a scheme of complexity $\mathcal{O}(N)$ for symbolic computation of $\left(\prod_{j=1}^{N-1}A_j\right)x_N$. Subsequently, a use of \eqref{lemeq1} determines the expression of $D^1_N$.
\end{remark}
\subsection{Crank-Nicolson compact scheme}\label{ssec:cranknicolson}
Let us define $\tilde{Y}=\frac{1}{2}Y$, and thus $\tilde{W}:=X^{-1}\tilde{Y}=\frac{1}{2}W.$ 
Taking $\theta=1/2,$ in (\ref{eq:oned_fully_discrete_compact_v}) the fully discrete Crank-Nicolson compact scheme can be written as
\begin{equation}\label{eq:xy_CN_v}
(X+\tilde{Y})U^{m+1}=(X-\tilde{Y})U^m+\tilde{F}^m, 
\end{equation}
where 
\begin{align*}
\tilde{F}^m =\left[\left(p_1-\frac{l_{1}}{2}\right)h_1(v_m)-\left(p_1+\frac{l_{1}}{2}\right)h_1(v_{m+1}),0,...,0,\left(r_{N-1}-\frac{n_{N-1}}{2}\right)h_2(v_m)\right.\\
\left.-\left(r_{N-1}+\frac{n_{N-1}}{2}\right)h_2(v_{m+1})\right]\nonumber.
\end{align*}
\noindent Furthermore, (\ref{eq:xy_CN_v}) can be written as:
 \begin{equation}\label{eq:w_CN_v}
(I+\tilde{W})U^{m+1}=(I-\tilde{W})U^{m}+X^{-1}\tilde{F}^m.
\end{equation}

\begin{theorem}\label{theorem:stability_v}
If the real part of each root of polynomial $D^1_N$ is positive, then the Crank-Nicolson compact scheme (\ref{eq:w_CN_v}) is stable.
\end{theorem}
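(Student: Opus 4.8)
The plan is to follow the template of the proof of Theorem \ref{theorem:stability_B_v}, the only new feature being that the Crank--Nicolson update replaces the reciprocal map $\lambda\mapsto 1/(1+\lambda)$ by the M\"obius map $\lambda\mapsto(1-\lambda/2)/(1+\lambda/2)$. First I would invoke Theorem \ref{theorem:real part_v}: every eigenvalue $\lambda$ of $W$ is a root of $D^1_N$, so by hypothesis $\operatorname{Re}(\lambda)>0$. Since $\tilde{W}=\tfrac12 W$, the matrix $\tilde{W}$ has the same eigenvectors as $W$ and eigenvalues $\lambda/2$, which again have positive real part; in particular $-1$ is not an eigenvalue of $\tilde{W}$, so $I+\tilde{W}$ is invertible and \eqref{eq:w_CN_v} may be rewritten as $U^{m+1}=HU^m+(X+\tilde{Y})^{-1}\tilde{F}^m$ with amplification matrix $H=(I+\tilde{W})^{-1}(I-\tilde{W})$.

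Next I would determine the spectrum of $H$. If $(\lambda,\beta)$ is an eigenpair of $W$, then $\tilde{W}\beta=\tfrac{\lambda}{2}\beta$, hence $(I-\tilde{W})\beta=(1-\tfrac{\lambda}{2})\beta$ and $(I+\tilde{W})\beta=(1+\tfrac{\lambda}{2})\beta$, so that $H\beta=\mu\beta$ with $\mu=(1-\lambda/2)/(1+\lambda/2)$; conversely, by the spectral mapping theorem for rational functions of a matrix, every eigenvalue of $H$ arises in this way. It then remains to check that $|\mu|<1$ for each such $\lambda$, which follows from the one-line identity $|1+\lambda/2|^2-|1-\lambda/2|^2=2\operatorname{Re}(\lambda)>0$. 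Consequently the spectral radius of $H$ is strictly less than one, and stability of \eqref{eq:w_CN_v} follows exactly as in the proof of Theorem \ref{theorem:stability_B_v}.

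In fact the substantive work is already behind us: the difference-equation machinery of Lemma \ref{lem2} and Theorem \ref{theorem:real part_v} is what makes the (otherwise intractable) eigenvalues of $W$---and therefore of $\tilde{W}$ and of $H$---accessible through the explicit polynomial $D^1_N$ of \eqref{eq:d1d2}. What little remains is routine: confirming that halving the eigenvalues (passing from $W$ to $\tilde{W}$) preserves the sign of their real parts, checking invertibility of $I+\tilde{W}$, and the modulus estimate above. The only point that calls for a touch of care is the one already implicit in Theorem \ref{theorem:stability_B_v}: since $H$ need not be normal, I would state stability in the eigenvalue sense used there, with the option of adding a remark that a uniform bound on the powers $H^m$ can be extracted with the help of the condition-number estimate derived in Section \ref{sec:condition}.
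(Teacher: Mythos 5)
Your proposal is correct and follows essentially the same route as the paper: use Theorem \ref{theorem:real part_v} and $\tilde{W}=\tfrac12 W$ to get $\operatorname{Re}(\tilde{\lambda})>0$ for each eigenvalue $\tilde{\lambda}$ of $\tilde{W}$, rewrite the scheme with amplification matrix $(I+\tilde{W})^{-1}(I-\tilde{W})$, and conclude $\bigl|\tfrac{1-\tilde{\lambda}}{1+\tilde{\lambda}}\bigr|<1$ from the positivity of the real part. Your added remarks (invertibility of $I+\tilde{W}$, the explicit modulus identity, the non-normality caveat) only make explicit what the paper leaves implicit.
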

\begin{proof}
Let $\tilde{\lambda}$ be an eigenvalue of matrix $\tilde{W}$, then using $2\tilde{W} = W$, and Theorem \ref{theorem:real part_v}, $2\tilde{\lambda}$ is a root of the polynomial $D^1_N$. Hence, if every root of $D^1_N$ has positive real part, the real part of $\tilde{\lambda}$ is also positive. Therefore $\tilde{W}$ is invertible, and we can rewrite the Eq. \eqref{eq:w_CN_v} as
$$U^{m+1}=\tilde{H}U^{m}+(X+\tilde{Y})^{-1}\tilde{F}^m,$$ 
where $\tilde{H}=(I+\tilde{W})^{-1}(I-\tilde{W})$.  
Moreover, $\frac{1-\tilde{\lambda}}{1+\tilde{\lambda}}$ is an eigenvalue of $\tilde{H}$ iff $\tilde{\lambda}$ is an eigenvalue of $\tilde{W}$. Thus, Crank-Nicolson compact scheme (\ref{eq:w_CN_v}) is stable if we have $\abs{\frac{1-\tilde{\lambda}}{1+\tilde{\lambda}}}<1$ for each $\tilde{\lambda}$. Since real part of $\tilde{\lambda}$ is positive, therefore $\abs{1-\tilde{\lambda}}<\abs{1+\tilde{\lambda}}$. Thus $\abs{\frac{1-\tilde{\lambda}} {1+\tilde{\lambda}}}<1,$ as desired.
\end{proof}

\section{Special Case: Constant coefficient problem}\label{sec:stability} 
\noindent In this section, we consider constant coefficient version of convection diffusion equation (\ref{eq:oned_main_equation_2_v}) by replacing $a(z)$ and $b(z)$ by scalars $a$ and $b$ respectively. We prove below that the assumption on the characteristic polynomial $D^1_N$ in Theorems \ref{theorem:stability_B_v} and \ref{theorem:stability_v} are true for this special case. 
Consider the following constant coefficient convection-diffusion equation
\begin{equation}
\frac{\partial \psi}{\partial v}(v,x)+a\frac{\partial \psi}{\partial x}(v,x)-b\frac{\partial^2 \psi}{\partial x^2}(v,x)=0,\label{eq:oned_main_equation1}
\end{equation}
If we take $u = \dfrac{b}{a}\psi,$ and $z=\dfrac{a}{b}x$ in above equation (\ref{eq:oned_main_equation1}), we have
\begin{equation}
\frac{\partial u}{\partial v}(v,z)+c\frac{\partial u}{\partial z}(v,z)-c\frac{\partial^2 u}{\partial z^2}(v,z)=0.\label{eq:oned_main_equation_2}
\end{equation}
\noindent Using (\ref{eq:oned_fully_discrete_compact1_v}), the compact scheme for equation (\ref{eq:oned_main_equation_2}) is as follows:
\begin{align}\label{eq:oned_fully_discrete_compact1}
\Delta_v^+ U_i^m&-\frac{{\delta z}^2}{12}\left(\Delta_z\Delta_v^+ U_i^m-\Delta_{zz}\Delta_v^+ U_i^m\right)+(1-\theta)c\left(\Delta_z U_i^m-\left(1+\frac{{\delta z}^2}{12}\right)\Delta_{zz} U_i^m\right)\nonumber \\
&+\theta c\left(\Delta_z U_i^{m+1}-\left(1+\frac{{\delta z}^2}{12}\right)\Delta_{zz} U_i^{m+1}\right)=0.
\end{align}
We define the constants depending on $\delta z$ and $\delta v$ as
\begin{align}
c_1=\frac{2+\delta z}{24\delta v},\: c_2&=\frac{5}{6 \delta v}, \: \text{and} \: c_3=\frac{2-\delta z}{24\delta v},\label{eq:c}\\
y_1=-\frac{c}{2 \delta z}-\frac{\left(c+\dfrac{c{\delta z}^2}{12}\right)}{{\delta z}^2},\: y_2&=2\frac{\left(c+\dfrac{c{\delta z}^2}{12}\right)}{{\delta z}^2}, \: \text{and} \: y_3=\frac{c}{2 \delta z}-\frac{\left(c+\dfrac{c{\delta z}^2}{12}\right)}{{\delta z}^2}.\label{eq:y}
\end{align}

\begin{remark}
For the constant coefficient case, a direct substitution of constant parameters in \eqref{Steady_state_discr} gives the following values $\gamma_i=1$, $\zeta_i=c$, and $\alpha_i=c+\dfrac{c{\delta z}^2}{12}$, for all $1\leq i\leq N-1$. Hence, the values of $p_i$, $q_i$, $r_i$, $l_i$, $m_i$, and $n_i$ coincide with those of $c_1$, $c_2$, $c_3$, $y_1$, $y_2$, and $y_3$ respectively. Thus the coefficients in (\ref{eq:c}) and (\ref{eq:y}) are special case of the coefficients obtained in (\ref{eq:c_v}) and (\ref{eq:y_v}). 
\end{remark}

\noindent Further, denote $\frac{\delta v}{{\delta z}^2}$ as $d$ and we write $$y_1=\frac{-c}{2 \delta v}\left[\left(2+\delta z\right)d+\frac{\delta v}{6}\right], 
\quad y_2=\frac{c}{ \delta v} \left(2d+\frac{\delta v}{6}\right), \quad \mbox{and} \quad y_3=\frac{-c}{2 \delta v}\left[\left(2-\delta z\right)d+\frac{\delta v}{6}\right].$$
Then Eq. (\ref{eq:oned_fully_discrete_compact1}) can be written as 
\begin{equation}\label{eq:oned_fully_discrete_compact}
[(1-\theta)X_1+\theta(X_1+Y_1)]U^{m+1}=[\theta X_1+ (1-\theta)(X_1-Y_1)]U^m+F_1^m,
\end{equation}
where $X_1$ and $Y_1$ are the following tridiagonal matrices of order $(N-1)\times(N-1)$:
\begin{equation}
\label{eq_matrix_x}
X_1\vcentcolon =\left[ 
\begin{array}{c c c c c c c c c} 
  c_2 & c_3 & & & \cdots & & & &\\ 
  c_1 & c_2 & c_3 & & \cdots & & & & \\ 
 & c_1 & c_2 & c_3 &\cdots & & & & \\ 
 & & & & \vdots & & & &\\
 & & & & \cdots & c_1 & c_2 & c_3 & \\ 
 & & & & \cdots & & c_1 & c_2 & c_3 \\ 
 & & & & \cdots & & & c_1 & c_2
\end{array} 
\right], \quad
\end{equation}
\begin{equation}
\label{eq_matrix_y}
Y_1\vcentcolon =\left[ 
\begin{array}{c c c c c c c c c} 
 y_2 & y_3 & & & \cdots & & & &\\ 
 y_1 & y_2 & y_3 & & \cdots & & & & \\ 
 & y_1 & y_2 & y_3 &\cdots & & & & \\ 
 & & & & \vdots & & & &\\
 & & & & \cdots & y_1 & y_2 & y_3 & \\ 
 & & & & \cdots & & y_1 & y_2 & y_3 \\ 
 & & & & \cdots & & & y_1 & y_2
\end{array} 
\right],
\end{equation}
and $F_1^m$ is a $N-1$ dimensional coloum vector consists of boundary values. Its expression is given for each $\theta$ in the following section. In the next section, the stability of the above fully discrete problem (\ref{eq:oned_fully_discrete_compact}) for backward Euler ($\theta = 1$) and Crank-Nicolson ($\theta =\frac{1}{2}$) schemes are proved with no additional assumption on the coefficient.
If the real constants $A,B$ and $C$ are such that $\frac{B^2}{4AC}\in [0,1]$, then clearly there is a unique $\phi\in [0,\pi]$ so that $\cos^2{\frac{\phi}{2}}= \frac{B^2}{4AC}$. A direct calculation gives that this identity is equivalent to the following relation
\begin{equation}
\label{eq:lemma_1}
\frac{(2B^2-4AC+2B\sqrt{B^2-4AC})}{4AC}=2\cos^2{\frac{\phi}{2}}-1+2\cos{\frac{\phi}{2}}\sqrt{-\sin^2{\frac{\phi}{2}}}=\cos{\phi}+i\sin{\phi},
\end{equation}
where $i=\sqrt{-1}$. The above result has also appeared in \cite{mohebbi2010high}. 
We will use this relation in the following sections.
\subsection{Backward Euler compact scheme} Taking $\theta=1$ in (\ref{eq:oned_fully_discrete_compact}), the fully discrete backward Euler compact scheme can be written as:
\begin{equation}\label{eq:xy_B}
(X_1+Y_1) U^{m+1}=X_1U^m+F_1^m, 
\end{equation}
where $X_1$ and $Y_1$ are given in (\ref{eq_matrix_x}), and (\ref{eq_matrix_y}) respectively. Moreover 
$$F_1^m=[c_1h_1(v^m)-(c_1+y_{1})h_1(v^{m+1}),0,\dots,0,c_3h_2(v^m)-(c_3+y_{3})h_2(v^{m+1})].$$
Evidently, $X_1$ is diagonally dominant, and hence invertible. Using $W_1=X_1^{-1}Y_1$ in (\ref{eq:xy_B}), we get
\begin{equation}\label{eq:w_B}
(I+W_1)U^{m+1}=U^{m}+X_1^{-1}F_1^m.
\end{equation}
The stability of (\ref{eq:w_B}), 
is asserted in Theorem \ref{lemma:stability_B}. To establish that 
we prove the following results.
\begin{theorem}\label{theorem:real part}
Let $\lambda$ 
be an eigenvalue 
of matrix $W_1$, then the real part of $\lambda$ is positive.
\end{theorem}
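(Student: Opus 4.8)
The plan is to convert the generalized eigenvalue problem $Y_1\beta=\lambda X_1\beta$ into a scalar constant-coefficient linear recurrence and exploit the Toeplitz structure of $X_1$ and $Y_1$. Writing $(Y_1-\lambda X_1)\beta=0$ row by row and appending $\beta_0=\beta_N=0$ as in \eqref{constraint}, an eigenvector $\beta=(\beta_1,\dots,\beta_{N-1})$ must satisfy
\[
C\,\beta_{j-1}+B\,\beta_j+A\,\beta_{j+1}=0,\qquad 1\le j\le N-1,
\]
where $A=y_3-\lambda c_3$, $B=y_2-\lambda c_2$, $C=y_1-\lambda c_1$ are now \emph{constant in} $j$. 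This is the constant-coefficient instance of \eqref{linear_system_FB_v}, so by Theorem \ref{theorem:real part_v} $\lambda$ is a root of $D^1_N$; the advantage here is that $D^1_N$ and the admissible $\lambda$ can be made explicit.

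First I would dispose of degeneracies. If $A=0$ or $C=0$ the recurrence is effectively first order and, combined with $\beta_0=0$, forces $\beta\equiv 0$; the cases $A=B=0$ and $B=C=0$ are impossible, since $y_2/c_2>0$ whereas $y_1/c_1<0$ and $y_3/c_3<0$ (recall $c=a^2/b>0$ and $\delta z$ small). Hence the characteristic equation $A\mu^2+B\mu+C=0$ has two nonzero roots $\mu_1,\mu_2$; if $\mu_1=\mu_2$ then $\beta_0=\beta_N=0$ again forces $\beta\equiv 0$, so $\mu_1\neq\mu_2$ and $\beta_j=K(\mu_1^{\,j}-\mu_2^{\,j})$ for some $K\neq 0$. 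Imposing $\beta_N=0$ gives $(\mu_1/\mu_2)^N=1$ with $\mu_1/\mu_2\neq 1$, i.e.\ $\mu_1/\mu_2=e^{2\pi i k/N}$ for some $k\in\{1,\dots,N-1\}$. By Vieta's relations $\mu_1\mu_2=C/A$ and $\mu_1+\mu_2=-B/A$, this is equivalent to
\[
\frac{B^2}{AC}=\frac{\mu_1}{\mu_2}+2+\frac{\mu_2}{\mu_1}=2+2\cos\frac{2\pi k}{N}=4\cos^2\frac{\pi k}{N}=:4t_k^2,
\]
with $t_k^2\in[0,1)$. Clearing denominators, $\lambda$ is a root of the real quadratic $P_k\lambda^2+Q_k\lambda+R_k=0$, where $P_k=c_2^2-4t_k^2c_1c_3$, $Q_k=4t_k^2(c_1y_3+c_3y_1)-2c_2y_2$, and $R_k=y_2^2-4t_k^2y_1y_3$.

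The only genuine computation is then to verify the sign pattern $P_k>0$, $R_k>0$, $Q_k<0$. Substituting the explicit values of the $c_i$ and $y_i$ and abbreviating $d=\delta v/{\delta z}^2>0$, $s=\delta v/6>0$ (with $c>0$ and $\delta z$ small), one finds
\[
P_k=\frac{400-4t_k^2(4-{\delta z}^2)}{576\,\delta v^2}>0,\qquad Q_k=-\frac{c}{\delta v^2}\Big(\tfrac53(2d+s)+\tfrac{t_k^2}{6}\big((4-{\delta z}^2)d+2s\big)\Big)<0,
\]
and, after expanding the products,
\[
R_k=\frac{c^2}{\delta v^2}\Big(4d^2(1-t_k^2)+t_k^2{\delta z}^2d^2+4ds(1-t_k^2)+s^2(1-t_k^2)\Big)>0,
\]
where in every case the sign is forced by $1-t_k^2>0$ (true because $0<k<N$), the first summand of $R_k$ being strictly positive.

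Finally, for a quadratic $P\lambda^2+Q\lambda+R$ with $P>0$, both roots lie in the open right half-plane precisely when $Q<0$ and $R>0$: in the real-root case the sum $-Q/P$ and the product $R/P$ are both positive, and in the complex-conjugate case the common real part $-Q/(2P)$ is positive (this is the Routh--Hurwitz criterion applied to $P\lambda^2-Q\lambda+R$). Consequently every root of $P_k\lambda^2+Q_k\lambda+R_k$, and in particular $\lambda$, has positive real part; as $k\in\{1,\dots,N-1\}$ was arbitrary, this proves the theorem. I expect the main obstacle to be organizational rather than conceptual — arranging the expansions of $P_k$, $Q_k$, $R_k$ so that all coefficients of $d^2$, $ds$, $s^2$ are manifestly of one sign — while the degenerate-case bookkeeping is a minor technical nuisance that is quickly dispatched.
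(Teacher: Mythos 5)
Your proposal is correct and follows essentially the same route as the paper's proof: reduce the eigenproblem to the constant-coefficient three-term recurrence with $A=y_3-\lambda c_3$, $B=y_2-\lambda c_2$, $C=y_1-\lambda c_1$, use the boundary conditions $\beta_0=\beta_N=0$ to force $\nu_1/\nu_2=e^{2\pi i k/N}$ and hence $\frac{B^2}{4AC}=\cos^2\left(\frac{k\pi}{N}\right)$, and then conclude from the sign pattern (positive leading coefficient, negative middle coefficient, positive constant term) of the resulting real quadratic in $\lambda$ that all roots lie in the open right half-plane. The only deviations are cosmetic: you dispose of the repeated-root case $B^2=4AC$ by showing it admits no eigenvector (the paper instead verifies positivity of the roots there as its Case II), you fold $\varphi=0$ into the quadratic rather than treating it separately, and you explicitly rule out $A=B=0$ and $B=C=0$, a small degeneracy the paper passes over silently.
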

\begin{proof}
Let $\beta=(\beta_{1}, \beta_{2}, \ldots, \beta_{N-1})$ be  an eigenvector corresponding to the eigenvalue $\lambda$. So we have $X_1^{-1} Y_1 \beta=\lambda \beta$ or $(Y_1 - \lambda X_1) \beta =0$. This linear system in $\beta$ can be written explicitly as
\begin{align*}
\left.\begin{array}{rl}
\left(y_2-\lambda c_2\right) \beta_1+\left(y_3-\lambda c_3\right)\beta_2&=0,\\
\left(y_1-\lambda c_1\right)\beta_{j-1}+\left(y_2-\lambda c_2\right) \beta_j+\left(y_3-\lambda c_3\right)\beta_{j+1}&=0,\quad
j=2, \ldots, N-2,\\
\left(y_1-\lambda c_1\right)\beta_{N-2}+\left(y_2-\lambda c_2\right) \beta_{N-1}&=0.
\end{array}\right\}
\end{align*}
Let us introduce $\beta_{0}=\beta_{N}=0$. Then the above system can be rewritten as
\begin{equation}\label{linear_system_FB}
\left(y_1-\lambda c_1\right)\beta_{j-1}+\left(y_2-\lambda c_2\right) \beta_j+\left(y_3-\lambda c_3\right)\beta_{j+1}=0,
\end{equation}
for all $j=1,2,\dots,N-1$. Let $A=\left(y_3-\lambda c_3\right), B=\left(y_2-\lambda c_2\right)$, and $C=\left(y_1-\lambda c_1\right)$. We first notice that if either of $A$ or $C$ is zero, then by using (\ref{linear_system_FB}) iteratively, one obtains $\beta_j=0 \: \forall \: j=1,2,...,N-1$. Thus solutions of $A=0$ or $C=0$ for $\lambda$ do not give eigenvalues. Hence, we consider the cases where $AC\neq0$. We complete the argument by considering two complementary cases.\\

\noindent {\bf Case I:} Assume that $B^{2}-4 A C \neq 0$. Since $\lambda$ is such that $C\neq 0$, the solution-space of \eqref{linear_system_FB} is known to have  dimension $2$ (Theorem $2.21$, \cite{SElaydi05}).  Since $C\neq 0$ and $B^2-4AC \neq 0$, the pair of roots $\nu_{1}$ and $\nu_{2}$ of $A q^{2}+B q+C$ are distinct and nonzero. Using these roots one can directly verify that the general solution of difference scheme $(\ref{linear_system_FB})$ is
\begin{equation}
\label{eq:beta}
\beta_{j}=K_{1} {\nu_{1}}^{j}+K_{2} {\nu_{2}}^{j}, \quad j=0,1, \ldots, N.
\end{equation}
In order to satisfy $\beta_{0}=0$, we must have $K_{1}=-K_{2}\neq 0$. Note that only the nonzero coefficients are relevant as $\beta$ is an eigenvector. Again, to satisfy $\beta_{N}=0$, i.e. $K_{1}\left(\nu_{1}^{N}-\nu_{2}^{N}\right)=0$ we must have $\left(\frac{\nu_{1}}{\nu_{2}}\right)^{N}=1$. 
Therefor, every eigenvalue $\lambda$ of $W$ for which $B^{2}-4 A C \neq 0$, is such that the roots $\nu_{1}$ and $\nu_{2}$ of $A q^{2}+B q+C$ satisfy
$$
\frac{\nu_{1}}{\nu_{2}}=e^{i \frac{2 k \pi}{N}}=\cos \left(\frac{2 k \pi}{N}\right)+i \sin \left(\frac{2 k \pi}{N}\right), \quad \textrm{ for some } k=1,2, \ldots, N-1.
$$
Again since $\nu_{1}$ and $\nu_{2}$ are the solutions of quadratic equation, we also get 
$$
\frac{\nu_{1}}{\nu_{2}}= \frac{-B-\sqrt{B^2-4AC}}{-B+\sqrt{B^2-4AC}} =
\frac{2 B^{2}-4 A C+2 B \sqrt{B^{2}-4 A C}}{4 A C}.
$$
From above two expressions of $\frac{\nu_{1}}{\nu_{2}}$, and (\ref{eq:lemma_1}), we have a simpler relation
$$
\frac{B^{2}}{4 A C}=\cos ^{2}\left(\frac{k \pi}{N}\right), \quad \textrm{ for some } k=1,2, \ldots, N-1.
$$
Thus the eigenvalues can be obtained by solving the following family of equations
\begin{equation}\label{eq:FB_cos_eq_B}
B^{2}-4AC \varphi=0, \quad \varphi=\cos ^{2}\left(\frac{k \pi}{N}\right)  \in[0,1), \quad \textrm{ for some } k=1,2, \ldots, N-1.
\end{equation}
If $\varphi$ happens to be zero, to satisfy \eqref{eq:FB_cos_eq_B}, $B$ must be zero. That corresponds to an eigenvalue $\lambda=\frac{6 c}{5}\left(2 d +\frac{\delta v}{6} \right)$, which is real and positive. For $\varphi\in (0,1)$, substituting the expressions of $A$, $B$, and $C$ in $(\ref{eq:FB_cos_eq_B})$, we obtain after rearrangement
\begin{equation}\label{eq:lambada_B}
(c_2^2 -4 c_1 c_3 \varphi) \lambda^{2} -(2 c_2y_2-4 c_1 y_3\varphi-4c_3y_1 \varphi) \lambda +(y_2^2-4y_1y_3 \varphi)=0.
\end{equation}
Using (\ref{eq:c})-(\ref{eq:y}), the coefficient of $\lambda^2$ is $(c_2^2 -4 c_1 c_3 \varphi)=(25-\varphi+{\delta z}^2 \varphi/4)/(36\delta v^2)$ which is positive as $\varphi\in (0,1)$. Similarly the constant term $(y_2^2-4y_1y_3 \varphi)=c^2\left[4 (1-\varphi)(\frac{1}{{\delta z}^2}+\frac{1}{12})^2+\frac{\varphi}{{\delta z}^2}\right]$ is also positive. On the other hand the coefficient of $\lambda$, that is $-(2  c_2y_2-4 c_1 y_3\varphi-4c_3y_1 \varphi)=- \frac{2 c}{3\delta v}\left(\frac{1}{{\delta z}^2}(5+\varphi) +\frac{5}{12} -\frac{\varphi}{6}\right)$, is a negative quantity. Therefore, both the sum and product of the roots of \eqref{eq:lambada_B} are positive. Hence, the real parts of the roots are positive.

\noindent {\bf Case II:} Assume $B^{2}-4 A C=0$, which gives
\begin{equation}
\label{eq:lambada_FB_B}
(c_2^2 -4 c_1 c_3) \lambda^{2} -(2c_2 y_2 -4 c_1 y_3-4c_3y_1 ) \lambda +(y_2^2-4y_1y_3)=0.
\end{equation}
Again by simplifying the coefficients using (\ref{eq:c})-(\ref{eq:y}), as in Case I, we observe that both the sum and product of the roots of \eqref{eq:lambada_FB_B} are positive. Hence, the real parts of the roots are positive.
\end{proof}

\begin{theorem}\label{lemma:stability_B}
The backward Euler compact scheme (\ref{eq:w_B}) is stable.
\end{theorem}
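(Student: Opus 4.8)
The plan is to reduce the stability of the backward Euler compact scheme \eqref{eq:w_B} to the spectral property already secured in Theorem \ref{theorem:real part}. Recall that \eqref{eq:w_B} can be rewritten as
\begin{equation*}
U^{m+1}=H_1 U^m + (X_1+Y_1)^{-1}F_1^m,
\end{equation*}
where $H_1=(I+W_1)^{-1}$. Stability in the matrix-method sense amounts to showing that the spectral radius of $H_1$ does not exceed $1$; equivalently, that every eigenvalue $\mu$ of $H_1$ satisfies $|\mu|\le 1$. So the first step is to invoke Theorem \ref{theorem:real part}: every eigenvalue $\lambda$ of $W_1$ has strictly positive real part. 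In particular $\lambda\ne -1$, so $I+W_1$ is invertible and $H_1$ is well defined; and if $(\lambda,\beta)$ is an eigenpair of $W_1$, then $H_1\beta=\frac{1}{1+\lambda}\beta$, so the eigenvalues of $H_1$ are exactly $\frac{1}{1+\lambda}$ as $\lambda$ ranges over the spectrum of $W_1$.

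The second step is the elementary estimate: if $\operatorname{Re}\lambda>0$, then $|1+\lambda|^2 = (1+\operatorname{Re}\lambda)^2 + (\operatorname{Im}\lambda)^2 > 1$, hence $\left|\frac{1}{1+\lambda}\right|<1$. Therefore the spectral radius of $H_1$ is strictly less than $1$, which gives (uniform) power-boundedness of $H_1$ and hence stability of the recursion \eqref{eq:w_B}. This is essentially the same argument as in the proof of Theorem \ref{theorem:stability_B_v}, with the hypothesis on the roots of $D_N^1$ now replaced by the unconditional conclusion of Theorem \ref{theorem:real part}; indeed one may simply observe that the hypothesis of Theorem \ref{theorem:stability_B_v} is verified for the constant-coefficient case by Theorem \ref{theorem:real part} and quote that theorem directly.

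One technical point worth a remark rather than an obstacle: passing from ``spectral radius $<1$'' to genuine stability (boundedness of $\|H_1^m\|$ uniformly in $m$, and more importantly uniform boundedness as $\delta z,\delta v\to 0$) is immediate if $H_1$ is diagonalizable, and in the general case follows from the fact that a fixed-size matrix with spectral radius below $1$ is power-bounded; if a norm-independent statement uniform in the mesh is wanted, one would note that $W_1$ is similar to a symmetric (or at least normal) matrix after an appropriate diagonal scaling coming from the structure of $X_1$ and $Y_1$, making $H_1$ normal and $\|H_1^m\|\le(\rho(H_1))^m\le 1$. The main (and really the only) substantive input is Theorem \ref{theorem:real part}; everything here is bookkeeping around it, so the proof will be short.

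\begin{proof}
By Theorem \ref{theorem:real part}, every eigenvalue $\lambda$ of $W_1$ has positive real part; in particular $1+\lambda\neq 0$, so $I+W_1$ is invertible and \eqref{eq:w_B} may be recast as
$$U^{m+1}=H_1 U^m+(X_1+Y_1)^{-1}F_1^m, \qquad H_1:=(I+W_1)^{-1}.$$
If $(\lambda,\beta)$ is an eigenpair of $W_1$, then $(I+W_1)\beta=(1+\lambda)\beta$, whence $H_1\beta=\frac{1}{1+\lambda}\beta$; thus the eigenvalues of $H_1$ are precisely $\frac{1}{1+\lambda}$. Since $\operatorname{Re}\lambda>0$, we have $|1+\lambda|^2=(1+\operatorname{Re}\lambda)^2+(\operatorname{Im}\lambda)^2>1$, so $\left|\frac{1}{1+\lambda}\right|<1$ for every such $\lambda$. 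Hence the spectral radius of $H_1$ is strictly less than $1$, and $H_1$ is power-bounded, which establishes the stability of the backward Euler compact scheme \eqref{eq:w_B}.
\end{proof}
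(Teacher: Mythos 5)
Your proof is correct and follows essentially the same route as the paper: the paper's own proof of this theorem simply invokes Theorem \ref{theorem:real part} and repeats the argument of Theorem \ref{theorem:stability_B_v}, i.e.\ rewriting the scheme with $H_1=(I+W_1)^{-1}$, identifying its eigenvalues as $\frac{1}{1+\lambda}$, and using $\operatorname{Re}\lambda>0$ to conclude the spectral radius is below $1$. Your additional remarks on power-boundedness are harmless elaboration beyond what the paper records.
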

\begin{proof} 
The proof follows using the assertion of Theorem \ref{theorem:real part}. The argument is analogous to that in the proof of Theorem \ref{theorem:stability_B_v}. We omit the details.
\end{proof}

\subsection{Crank Nicolson compact scheme}
\par Let us define $\tilde{Y}_1=\frac{1}{2}Y_1$. Hence its sub-diagonal, diagonal, and super-diagonal entries are $\tilde{y}_1=\frac{1}{2}y_1$, $\tilde{y}_2=\frac{1}{2}y_2$, and $\tilde{y}_3=\frac{1}{2}y_3$ respectively. 
Taking $\theta=1/2$ in (\ref{eq:oned_fully_discrete_compact}), we get the following fully discrete Crank-Nicolson compact scheme:
\begin{equation}\label{eq:xy}
(X_1+\tilde{Y}_1)U^{m+1}=(X_1-\tilde{Y}_1)U^m+\tilde{F}_1^m,
\end{equation}
where 
$\tilde{F}_1^m=[(c_1-\tilde{y}_{1})h_1(v_m)-(c_1+\tilde{y}_{1})h_1(v_{m+1}),0,...,0,(c_3-\tilde{y}_{3})h_2(v_m)-(c_3+\tilde{y}_{3})h_2(v_{m+1})]$.
Take $\tilde{W}_1=X_1^{-1}\tilde{Y}_1$, then (\ref{eq:xy}) can be written as:
 \begin{equation}\label{eq:w}
(I+\tilde{W}_1)U^{m+1}=(I-\tilde{W}_1)U^{m}+X_1^{-1}\tilde{F}_1^m.
\end{equation}

\begin{theorem}\label{lemma:stability}
The Crank-Nicolson compact scheme (\ref{eq:w}) for convection diffusion equation (\ref{eq:oned_main_equation_2}) is stable.
\end{theorem}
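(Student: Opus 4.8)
The plan is to mimic almost verbatim the argument used for the backward Euler scheme in Theorem \ref{lemma:stability_B} (equivalently, the variable-coefficient Crank--Nicolson argument of Theorem \ref{theorem:stability_v}); the only new ingredient is the scaling relation between $\tilde W_1$ and $W_1$. First I would record that, by the definitions $\tilde Y_1=\tfrac12 Y_1$ and $\tilde W_1=X_1^{-1}\tilde Y_1$, we have $\tilde W_1=\tfrac12 W_1$. Hence $\tilde\lambda$ is an eigenvalue of $\tilde W_1$ if and only if $2\tilde\lambda$ is an eigenvalue of $W_1$. Theorem \ref{theorem:real part} guarantees that every eigenvalue of $W_1$ has positive real part, so every eigenvalue $\tilde\lambda$ of $\tilde W_1$ also has positive real part; in particular $-1$ is not an eigenvalue of $\tilde W_1$, so $I+\tilde W_1$ is invertible and the one-step form of \eqref{eq:w} is well defined.

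Next I would rewrite \eqref{eq:w} as $U^{m+1}=\tilde H_1 U^m+(X_1+\tilde Y_1)^{-1}\tilde F_1^m$ with amplification matrix $\tilde H_1:=(I+\tilde W_1)^{-1}(I-\tilde W_1)$. Since $\tilde W_1$ commutes with $(I\pm\tilde W_1)^{-1}$, a pair $(\tilde\lambda,\beta)$ of eigenvalue and eigenvector of $\tilde W_1$ gives $\tilde H_1\beta=\tfrac{1-\tilde\lambda}{1+\tilde\lambda}\beta$, so the spectrum of $\tilde H_1$ consists exactly of the numbers $\tfrac{1-\tilde\lambda}{1+\tilde\lambda}$ as $\tilde\lambda$ runs over the spectrum of $\tilde W_1$. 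Because $\operatorname{Re}\tilde\lambda>0$, the point $\tilde\lambda$ lies strictly closer to $1$ than to $-1$, i.e. $\abs{1-\tilde\lambda}<\abs{1+\tilde\lambda}$, whence $\abs{\tfrac{1-\tilde\lambda}{1+\tilde\lambda}}<1$ for each such $\tilde\lambda$. Thus the spectral radius of $\tilde H_1$ is strictly less than $1$, which yields the asserted stability; the passage from spectral radius below $1$ to stability is handled exactly as in the proofs of Theorems \ref{theorem:stability_v} and \ref{lemma:stability_B}, so those details can be omitted.

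I do not expect any real obstacle here: all the difficulty has already been absorbed into Theorem \ref{theorem:real part}, where the positivity of the real parts of the eigenvalues of $W_1$ was established through the quadratic equations \eqref{eq:lambada_B} and \eqref{eq:lambada_FB_B} and the sign analysis of their coefficients. The Crank--Nicolson case reduces to it purely by the factor-$\tfrac12$ scaling, and the remaining step is the elementary geometric fact that a complex number with positive real part is mapped into the open unit disk by the Möbius transformation $w\mapsto\tfrac{1-w}{1+w}$.
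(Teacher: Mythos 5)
Your proposal is correct and follows essentially the same route as the paper: the scaling $\tilde W_1=\tfrac12 W_1$ combined with Theorem \ref{theorem:real part} gives positive real parts for the eigenvalues of $\tilde W_1$, and then the Möbius map argument $\abs{\tfrac{1-\tilde\lambda}{1+\tilde\lambda}}<1$ (as in Theorem \ref{theorem:stability_v}) bounds the spectral radius of the amplification matrix $(I+\tilde W_1)^{-1}(I-\tilde W_1)$ below $1$. The paper's proof is just a terser version of exactly this, deferring the details to the proofs of Theorems \ref{theorem:real part} and \ref{theorem:stability_v}.
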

\begin{proof}
Since $2\tilde{W}_1=W_1$, if $\tilde{\lambda}$ is an eigenvalue of $\tilde{W}_1$, then $2\tilde{\lambda}$ is an eigenvalue of $W_1$. Now using Theorem \ref{theorem:real part}, we get that real part of $2\tilde{\lambda}$ is positive. Since, $\tilde{\lambda}$ has positive real part, by following the line of arguments in the proof of Theorem \ref{theorem:stability_v}, we can show that the spectral radius of the amplification matrix for (\ref{eq:w}) is less than 1. Thus the Crank-Nicolson compact scheme (\ref{eq:w}) is stable.
\end{proof}

\section{Condition Number}\label{sec:condition}
In this section, we obtain upper bound for the condition number of the matrix $(I+W).$ First, we bound $\norm{W}_{2},$ where $\norm{.}_{2}$ is spectral norm of a matrix.

\begin{theorem}\label{lemma:wless1_B}
If $\delta z <2$, then we have
$$
\norm{W}_{2}=\mathcal{O}\left(\frac{\delta v}{{\delta z}^2}\right).
$$
\end{theorem}
\begin{proof}
It is enough to show that $\norm{X^{-1}}_2\norm{Y}_2 = \mathcal{O}\left(\frac{\delta v}{{\delta z}^2}\right)$. The upper bound for $\norm{X^{-1}}_2$ will be obtained using the notable Gerschgorin's circle theorem (GCT) [see for example, pp. 61, \cite{Smith78}]. Let $X^*$ be the transpose of $X$, which implies$ (X^{-1})^*X^{-1}=(XX^*)^{-1}$. Hence, following the definition of singular value, $h^2$ is an eigenvalue of $(XX^*)^{-1}$ if and only if $h$ is a singular value of $X^{-1}$. From (\ref{eq_matrix_x_v}), let us write the matrix $P=XX^*$ as follows:
\begin{equation}\label{eq:p_B}
P_{l,l'}= \left\{\begin{array}{ll}
p_l^2+q_l^2+r_l^2 &\textrm{ if }1 < l'=l < N-1\\
q_1^2+r_1^2 &\textrm{ if }l'=l=1 \\
q_{N-1}^2+r_{N-1}^2 &\textrm{ if }l'=l =N-1\\
q_{l\wedge l'}p_{(l\wedge l')+1}+r_{l\wedge l'}q_{(l\wedge l')+1} &\textrm{ if } |l'-l| =1\\
r_{l\wedge l'}p_{(l\wedge l')+2} &\textrm{ if } |l'-l| =2\\
0 &\textrm{ else},
\end{array}\right.
\end{equation}
for all $1 \leq l,l' \leq N-1$. Then $h^{-2}$ is an eigenvalue of the positive definite matrix $P$. Thus
\begin{equation}\label{eq:xinvnorm}
\norm{X^{-1}}_2 =\max\{h\mid h \textrm{ is a singular value of } X^{-1} \}= \frac{1}{\sqrt{\sigma_{min}(P)}},
\end{equation}
where $\sigma_{min}(P)$ is the smallest eigenvalue of $P$. Note that each eigenvalue of $P$ is positive. We obtain the lower bound of eigenvalues of $P$ using GCT. To this end, we consider the following $N-1$ different Gerschgorin's discs to estimate the eigenvalues of matrix $P$:
$$R_1\left(q_1^2+r_1^2,\quad q_1p_2+r_1(p_3+q_2)\right), \quad R_2\left(p_2^2+q_2^2+r_2^2, \quad p_2q_1+q_2(p_3+r_1)+r_2(p_4+q_3)\right),$$ $$R_{N-2}\left(p_{N-2}^2+q_{N-2}^2+r_{N-2}^2, \quad p_{N-2}(q_{N-3}+r_{N-4})+q_{N-2}(p_{N-1}+r_{N-3})+ r_{N-2}q_{N-1}\right),$$ 
$$R_{N-1}\left(p_{N-1}^2+q_{N-1}^2, \quad p_{N-1}(q_{N-2}+r_{N-3} )+q_{N-1}r_{N-2})\right), \textrm{ and-}$$ $$R_l\left(p_l^2+q_l^2+r_l^2, \quad p_l(q_{l-1}+r_{l-2})+q_l(p_{l+1}+r_{l-1})+r_l(p_{l+2}+q_{l+1})\right), \:\:\textrm{for all}\:\: 3 \leq l \leq N-3.$$ 
Here, $R_l(g_l,s_l)$ $1 \leq l \leq N-1$ denotes the $l^{th}$ disc with center $g_l$, and radius $s_l$. If we set $r_0=p_1=r_{N-1}=p_{N}=0$, then for each $1\leq l\leq N-1$, 
$$g_l= p_l^2+q_l^2+r_l^2, \quad\textrm{and } \quad s_l= p_l(q_{l-1}+r_{l-2})+q_l(p_{l+1}+r_{l-1})+r_l(p_{l+2}+q_{l+1}).$$
Clearly from (\ref{eq:c_v}), for sufficiently small $\delta z$, $(g_l-s_l)$ is positive, and of order $\mathcal{O}({\delta v}^{-2})$. Then an application of GCT gives
\begin{align*}
\sigma_{min}(P) \geq \min_{1 \le l\le N-1}(g_l-s_l). 
\end{align*}
Therefore,
\begin{equation}
\label{eq:normx}
\norm{X^{-1}}_2 <  \displaystyle \left(\min_{1 \le l\le N-1} (g_l-s_l) \right)^{-1/2} = \mathcal{O}({\delta v}).
\end{equation}
Let $n_0=l_1=n_{N-1}=l_N=0$, then we have
\begin{equation}
\label{eq:normYin}
\begin{aligned}
    \norm{Y}_{\infty}&=\max_{1 \leq i\leq N-1}\sum_{j=1}^{N-1}\abs{Y_{i,j}}=\max_{1 \leq i\leq N-1}\left(\abs{l_i}+\abs{m_i}+\abs{n_i}\right)\\
    & =\mathcal{O}\left(\frac{1}{{\delta z}^2}\right),
\end{aligned}
\end{equation}
and similarly,
\begin{equation}
\label{eq:normY1}
\begin{aligned}
\norm{Y}_1&=\max_{1 \leq j\leq N-1}\sum_{i=1}^{N-1}\abs{Y_{i,j}} = \max_{1 \leq j\leq N-1}\left(\abs{n_{j-1}}+\abs{m_j}+\abs{l_{j+1}}\right)\\
&=\mathcal{O}\left(\frac{1}{{\delta z}^2}\right).
\end{aligned}
\end{equation}
From the properties of subordinate matrix norm,
\begin{equation}
\label{eq:normy}
\norm{Y}_2 \leq \sqrt{\norm{Y}_1\norm{Y}_{\infty}}.
\end{equation}
Then from above we obtain
$$\norm{Y}_2 = \mathcal{O}\left(\frac{1}{{\delta z}^2}\right).$$
\noindent Therefore, we have
\begin{align*}
\norm{W}_{2}=\norm{X^{-1} Y}_{2} \le
\norm{X^{-1}}_2\norm{Y}_2 &= \mathcal{O}\left(\frac{\delta v}{{\delta z}^2}\right).
\end{align*}
Hence the proof is complete.
\end{proof}
\begin{theorem}\label{theo:condition_num}
If the real part of each eigenvalue of $W$ is positive, the condition number of the matrix $(I+W)$ is $\mathcal{O}\left(\frac{\delta v}{{\delta z}^2}\right)$.
\end{theorem}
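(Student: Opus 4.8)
The plan is to bound the condition number $\kappa_2(I+W) = \norm{I+W}_2\,\norm{(I+W)^{-1}}_2$ by estimating each of the two factors separately. For the first factor, I would use the triangle inequality together with Theorem~\ref{lemma:wless1_B}: since $\norm{I+W}_2 \le 1 + \norm{W}_2 = 1 + \mathcal{O}\!\left(\frac{\delta v}{{\delta z}^2}\right)$, and since we will be interested in the regime where $\frac{\delta v}{{\delta z}^2}$ is the dominant scaling, this gives $\norm{I+W}_2 = \mathcal{O}\!\left(\frac{\delta v}{{\delta z}^2}\right)$ (the constant $1$ being absorbed, or more conservatively $\norm{I+W}_2 = \mathcal{O}\!\left(1 + \frac{\delta v}{{\delta z}^2}\right)$, which is the same order once $\delta v/\delta z^2$ is bounded below, or one simply keeps the $1+$ form throughout).

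The main work is bounding $\norm{(I+W)^{-1}}_2$, and here the hypothesis that every eigenvalue $\lambda$ of $W$ has positive real part is essential. The clean route is: $(I+W)^{-1}$ exists because $-1$ is not an eigenvalue of $W$ (as $\mathrm{Re}\,\lambda>0$ forces $\lambda\neq -1$), so $I+W$ is invertible. To bound the norm of the inverse, I would argue that $\norm{(I+W)^{-1}}_2 = \mathcal{O}(1)$. One way is to observe that $H := (I+W)^{-1}$ is exactly the amplification matrix of the backward Euler scheme \eqref{eq:w_B_v}, whose eigenvalues are $\frac{1}{1+\lambda}$ with $\mathrm{Re}\,\lambda>0$, hence all lie strictly inside the unit disc; combined with a bound on the departure from normality this would control $\norm{H}_2$. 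A more robust argument avoiding normality issues uses the identity $(I+W)^{-1} = I - W(I+W)^{-1} = I - (I+W)^{-1}W$ or, better, relates $(I+W)^{-1}$ to $X$ and $Y$ directly: from $(I+W)^{-1} = (X+Y)^{-1}X$ one gets $\norm{(I+W)^{-1}}_2 \le \norm{(X+Y)^{-1}}_2 \norm{X}_2$, and then $\norm{(X+Y)^{-1}}_2$ is bounded below via Gerschgorin applied to $(X+Y)(X+Y)^*$ — noting $X+Y$ has entries of order $\mathcal{O}(\delta z^{-2})$ on its bands while $X$ has entries of order $\mathcal{O}(\delta v^{-1})$ — yielding $\norm{(I+W)^{-1}}_2 = \mathcal{O}\!\left(\frac{\delta z^{-1}\delta v^{-1}}{\delta z^{-2}}\right)$ type estimates; care must be taken that the lower bound on the smallest singular value of $X+Y$ does not degrade. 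Assembling $\kappa_2(I+W) \le \norm{I+W}_2\norm{(I+W)^{-1}}_2$ then gives the claimed $\mathcal{O}\!\left(\frac{\delta v}{{\delta z}^2}\right)$.

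I expect the main obstacle to be the bound on $\norm{(I+W)^{-1}}_2$: unlike $\norm{X^{-1}}_2$ in Theorem~\ref{lemma:wless1_B}, the matrix $I+W = X^{-1}(X+Y)$ is not obviously amenable to Gerschgorin because $W$ has intractable entries (this was precisely the difficulty flagged in the earlier remarks), so one must either route everything through the factored form $(X+Y)^{-1}X$ and Gerschgorin on $(X+Y)(X+Y)^*$, or exploit the spectral location $\mathrm{Re}\,\lambda>0$ together with a quantitative non-normality control. The cleanest exposition is likely the factored-form approach, mirroring the structure of the proof of Theorem~\ref{lemma:wless1_B} verbatim with $X$ replaced by $X+Y$; the verification that $X+Y$ is diagonally dominant (for $\delta z<2$, using boundedness of $a$, $b$, $b^{-1}$ as already noted after \eqref{eq:xy_B_v}) and that its Gerschgorin lower bound has the right order is the routine-but-essential calculation. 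I would therefore structure the proof as: (i) $\norm{I+W}_2 = \mathcal{O}(1 + \delta v/\delta z^2)$ from Theorem~\ref{lemma:wless1_B}; (ii) $I+W$ invertible since $\mathrm{Re}\,\lambda>0$; (iii) $\norm{(I+W)^{-1}}_2 \le \norm{(X+Y)^{-1}}_2\norm{X}_2$ with the first factor bounded by Gerschgorin on $(X+Y)(X+Y)^*$ and $\norm{X}_2 = \mathcal{O}(\delta v^{-1})$; (iv) multiply.
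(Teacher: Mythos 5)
Your treatment of the first factor coincides with the paper: $\kappa(I+W)=\norm{I+W}_2\norm{(I+W)^{-1}}_2$ and $\norm{I+W}_2\le 1+\norm{W}_2=\mathcal{O}\left(\frac{\delta v}{{\delta z}^2}\right)$ by Theorem \ref{lemma:wless1_B}. The gap is in the second factor, which is where the hypothesis $\mathrm{Re}\,\lambda>0$ must do quantitative work. The paper uses it directly: the eigenvalues of $(I+W)^{-1}$ are $\frac{1}{1+\lambda}$, each of modulus less than one, from which it concludes $\norm{(I+W)^{-1}}_2<1$ and hence $\kappa(I+W)\le 1+\norm{W}_2$. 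In your proposal the hypothesis is used only to get invertibility, and the required bound $\norm{(I+W)^{-1}}_2=\mathcal{O}(1)$ is never established: the route you prefer, $\norm{(I+W)^{-1}}_2\le\norm{(X+Y)^{-1}}_2\norm{X}_2$ with Gerschgorin applied to $(X+Y)(X+Y)^*$, is left as a sketch, and the order count you pencil in does not close. Since $\norm{X}_2=\mathcal{O}(\delta v^{-1})$ (its row and column sums are of size $\delta v^{-1}$), within this factored route you need $\sigma_{\min}(X+Y)\gtrsim\delta v^{-1}$, i.e.\ $\norm{(X+Y)^{-1}}_2=\mathcal{O}(\delta v)$; your ``$\mathcal{O}(\delta z^{-1}\delta v^{-1}/\delta z^{-2})$'' figure equals $\mathcal{O}(\delta z/\delta v)$, which in the natural regime $\delta v\sim{\delta z}^2$ is $\mathcal{O}(\delta z^{-1})$, not $\mathcal{O}(1)$, and would destroy the claimed bound. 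Nor can one read $\sigma_{\min}(X+Y)\sim\delta z^{-2}$ off the band entries: the rows of $Y$ sum to zero ($l_i+m_i+n_i=0$), so the row sums of $X+Y$ are only of size $\delta v^{-1}$, and this near-cancellation across each row is precisely what makes the lower bound delicate. Until that estimate is actually carried out (for instance by showing the symmetric part of $X+Y$ has smallest eigenvalue of order $\delta v^{-1}$, using the zero row sums of $Y$ and the negativity of its off-diagonal entries for small $\delta z$), the proposal delivers invertibility plus an unquantified hope, not the theorem.

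In fairness, the scruple that motivated your detour is sound: for a non-normal matrix, eigenvalue moduli below one do not by themselves bound the spectral norm below one, and $W=X^{-1}Y$ is not normal in general, so the paper's one-line inference from $\abs{1/(1+\lambda)}<1$ to $\norm{(I+W)^{-1}}_2<1$ itself leans on more than it states (normality, a well-conditioned diagonalization, or a field-of-values argument would be needed to make it airtight). But identifying that difficulty is not the same as resolving it; as submitted, your proof of Theorem \ref{theo:condition_num} is incomplete at exactly the step the theorem's hypothesis is meant to control.
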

\begin{proof}
Note that if $\lambda$ is an eigenvalue of $W$, then $\frac{1}{1+\lambda}$ is an eigenvalue of $(I+W)^{-1}.$ Given that real part of $\lambda$ is positive, we have $\left|\frac{1}{1+\lambda}\right|<1.$ It gives $\norm{(I+W)^{-1}}_{2}<1.$ Thus condition number of matrix $(I+W)=\norm{(I+W)}_{2}\norm{(I+W)^{-1}}_{2}$ is upper bounded by $1+\norm{W}_{2}=\mathcal{O}\left(\frac{\delta v}{{\delta z}^2}\right)$, using Theorem \ref{lemma:wless1_B}. Thus the condition number of $(I+W)$ is $ \mathcal{O}\left(\frac{\delta v}{{\delta z}^2}\right).$
\end{proof}

\begin{remark}
Similarly, the bound on the condition number of matrix $(I+\tilde{W})$ can be derived. Moreover, for the constant coefficient case, the bound on condition number for the matrix $(I+\tilde{W}_1)$ has already been derived in \cite{GoswamiKS}.
\end{remark}

\begin{remark}
A similar computation can be performed for the Crank-Nicolson compact scheme (\ref{eq:w_CN_v}) discussed in Sec. \ref{ssec:cranknicolson} in an analogous way, we have omitted the details for the same. Moreover, for constant coefficient case discussed in Sec. \ref{sec:stability}, the unconditional stability of backward Euler and Crank-Nicolson compact schemes are theoretically established in Theorems \ref{lemma:stability_B}, and \ref{lemma:stability} respectively. In view of that, the numerical verification for the constant coefficient case is omitted. Note that, similar numerical verification for constant coefficient case has been reported extensively by various authors.  
\end{remark}
\section{Numerical Illustrations}\label{sec:numerical}
\par In this section, the stability of backward Euler compact scheme (\ref{eq:w_B_v}), discussed in Sec. \ref{ssec:backward_euler}, is numerically verified for the special choice of the coefficients $a(z)$, and $b(z)$ in Eq. \eqref{eq:oned_main_equation_2_v}. For this specific choice of coefficients and a specific initial-boundary data, we have examined the sharpness of upper bounds, proposed in Section \ref{sec:condition}, on various matrix norms and the condition number of the discretized system of equations (\ref{eq:oned_fully_discrete_compact_v}) numerically. The computation of condition number and its bound for $(I+W)$ are presented for several different choices of $\delta v$ and $\delta z$ so that $\delta v / {\delta z}^2 = 25/32$.

\par To verify theoretical findings, we consider the parameters $a(z)=z+1$, $b(z)=(z+1)^2$, $z_l=0$, $z_r=1$, and $\delta v=0.1$. in Eq. (\ref{eq:oned_main_equation_2_v}). However, other choices of $a(z)$ and $b(z)$ can also be taken. For $N=2,3,\dots,8,$ the expressions for $D_N^1$ from (\ref{eq:d1d2}) are expressed in terms of $A_i,$ $B_i,$ and $C_i,$ $i=1,2,\dots,N-1$ in Table \ref{tab:eigen_test}. Note that for any $N\geq2,$ $D_N^1$ is a polynomial of degree $N-1.$  
Using (\ref{eq:c_v}), (\ref{eq:y_v}), and (\ref{eq:ajbjcj}), the roots of $D_N^1$, i.e., the eigenvalues of matrix $W,$ are computed and listed in Table \ref{tab:eigen_test}. It is observed that all the roots of $D_N^1$ are positive and the smallest root is converging to a limit above 2. Consequently, the desired condition of Theorem \ref{theorem:stability_B_v} is satisfied. Hence, the backward Euler compact scheme (\ref{eq:w_B_v}) for the variable coefficient equation (\ref{eq:oned_main_equation_2_v}) is stable for these parameters.

\par Table \ref{tab:normX} provides an upper bound on $\|X^{-1}\|_2$ for parameters $a(z)=z+1$, $b(z)=(z+1)^2$, $T=1,$ $z_l=0$, and $z_r=1$. For the matrix $X$ given in (\ref{eq_matrix_x_v}), the upper bound of $\|X^{-1}\|_2$ is computed using (\ref{eq:normx}) and listed in third column of Table \ref{tab:normX} for various values of $M$ and $N$. The entries in the fourth column of Table \ref{tab:normX} are obtained from (\ref{eq:xinvnorm}). The upper bound appears reasonably sharp for the given set of parameters. This bound helps us to find an expression of the upper bound on the condition number of $(I+W)$ in terms of discretization parameters.
\par The upper bound on $\|Y\|_2$ for parameters $a(z)=z+1$, $b(z)=(z+1)^2$, $z_l=0$, and $z_r=1$ is provided in Table \ref{tab:normY}. For the matrix $Y$ given in (\ref{eq_matrix_y_v}), the entries of the second and third columns in Table \ref{tab:normY} are computed using (\ref{eq:normYin}) and (\ref{eq:normY1}), respectively for various values of $N$. The upper bound of $\|Y\|_2$ is obtained using (\ref{eq:normy}) and listed in the fourth column of Table \ref{tab:normY}. The entries in fifth column are obtained from the expression $\|Y\|_2=\sqrt{\lambda_{max}(Y^*Y)}$, where $\lambda_{max}$ is the maximum eigenvalue of $Y^{*}Y$. The upper bound on $\|Y\|_2$ also plays an important role in studying the condition number of $(I+W).$
\par Note that the condition number of $(I+W)$ is crucial in solving (\ref{eq:w_B_v}). Utilizing the upper bounds on $\|X^{-1}\|_2$ and $\|Y\|_2,$ an upper bound on the condition number of $(I+W)$ is obtained by following Theorem \ref{theo:condition_num} for parameters $a(z)=z+1$, $b(z)=(z+1)^2$, $T=1,$ $z_l=0$, and $z_r=1$. Table \ref{tab:contion_num} provides the numerical value of the upper bound of the condition number of $(I+W)$ for some choices of $N$ and $M$ so that $\frac{\delta v}{{\delta z}^2}$ is constant. It is observed that the upper bound on the condition number is reasonably small, which asserts the robustness of the proposed numerical scheme. Moreover, in view of the last column of Table \ref{tab:contion_num}, the derived upper bound is reasonably sharp also.

\begin{table}[h!]
    \centering
    \begin{tabular}{c|c|c}
        \hline
        & $D^1_N$& Roots of $D^1_N$\\
        \hline \hline
        &&\\
        $N=2$& $-B_1$& 2.0600\\
        \hline 
        &&\\
        $N=3$& $B_1B_2 - A_1C_2$ & 7.9419\\
        &  & 2.1218\\
        \hline
        &&\\
        $N=4$& $A_2B_1C_3+A_1B_3C_2-B_1B_2B_3$  & 17.7303\\
        &  &8.0194\\ 
        &  &2.1423\\
        \hline
        &&\\
        $N=5$& $A_1A_3C_2C_4 - A_1B_3B_4C_2 - A_2B_1B_4C_3$ &31.4791\\
        & $ - A_3B_1B_2C_4 + B_1B_2B_3B_4$ &17.5329\\
        & &8.1618\\
        & &2.1491\\
        \hline
        &&\\
        $N=6$& $A_1B_3B_4B_5C_2 - A_1A_4B_3C_2C_5 - A_1A_3B_5C_2C_4$ & 49.3158\\
        & $- A_2A_4B_1C_3C_5 + A_2B_1B_4B_5C_3 + A_3B_1B_2B_5C_4$ &  30.5320\\
        & $+A_4B_1B_2B_3C_5 - B_1B_2B_3B_4B_5$ &17.8975\\
        & & 8.2353\\
        & &2.1517\\ 
        \hline
        &&\\
        $N=7$& $A_1A_3B_5B_6C_2C_4 - A_1A_3A_5C_2C_4C_6+A_1A_4B_3B_6C_2C_5$ & 71.3459\\
        & $ + A_1A_5B_3B_4C_2C_6-A_1B_3B_4B_5B_6C_2 + A_2A_4B_1B_6C_3C_5$ &47.0817\\
        & $-A_1B_3B_4B_5B_6C_2 + A_2A_4B_1B_6C_3C_5 + A_2A_5B_1B_4C_3C_6$ &31.0721\\
        & $- A_2B_1B_4B_5B_6C_3 + A_3A_5B_1B_2C_4C_6 - A_3B_1B_2B_5B_6C_4$ &18.1722\\
        & $- A_4B_1B_2B_3B_6C_5 - A_5B_1B_2B_3B_4C_6 + B_1B_2B_3B_4B_5B_6$ &8.2712\\
        & &2.1528\\
        \hline
        &&\\
        $N=8$& $A_1A_3A_5B_7C_2C_4C_6 + A_1A_3A_6B_5C_2C_4C_7 - A_1A_3B_5B_6B_7C_2C_4$&97.6417\\
        & $+ A_1A_4A_6B_3C_2C_5C_7 - A_1A_4B_3B_6B_7C_2C_5 - A_1A_5B_3B_4B_7C_2C_6$& 67.3382\\
        &$- A_1A_6B_3B_4B_5C_2C_7 + A_1B_3B_4B_5B_6B_7C_2 + A_2A_4A_6B_1C_3C_5C_7$& 47.5424\\
        &$- A_2A_4B_1B_6B_7C_3C_5 - A_2A_5B_1B_4B_7C_3C_6 - A_2A_6B_1B_4B_5C_3C_7$&31.6907\\
        &$+ A_2B_1B_4B_5B_6B_7C_3 - A_3A_5B_1B_2B_7C_4C_6 - A_3A_6B_1B_2B_5C_4C_7$& 18.3334\\
        &$+ A_3B_1B_2B_5B_6B_7C_4 - A_4A_6B_1B_2B_3C_5C_7 + A_4B_1B_2B_3B_6B_7C_5$& 8.2896\\
        &$+ A_5B_1B_2B_3B_4B_7C_6 + A_6B_1B_2B_3B_4B_5C_7 - B_1B_2B_3B_4B_5B_6B_7$ & 2.1534\\
        \hline
    \end{tabular}
    \caption{Expressions and roots of the polynomial $D^1_N$ for different choices of $N,$ with parameters $a(z)=z+1$, $b(z)=(z+1)^2$, $z_l=0$, $z_r=1$, and $\delta v=0.1$.}
    \label{tab:eigen_test}
\end{table}
\begin{table}
    \centering
    \begin{tabular}{c|c|c|c}
    \hline
        $N$ & $M$ & Upper bound on &   $\|X^{-1}\|_{2}$ \\
        & & $\|X^{-1}\|_{2},$ using (\ref{eq:normx})&\\
    \hline \hline 
        25 & 800 &$1935.87\times10^{-6}$ & $1870.88\times 10^{-6}$ \\
        50 & 3200 & $4840.86\times 10^{-7}$ & $4684.92\times 10^{-7}$\\
        100 & 12800 & $1210.29\times 10^{-7}$  & $1171.71\times 10^{-7}$ \\
        200 & 51200 & $3025.75\times 10^{-8}$ & $2929.59\times10^{-8}$\\
        400 & 204800 & $7564.41\times 10^{-9}$ & $7324.16\times 10^{-9}$ \\
        800 & 819200 & $1891.10\times10^{-9}$ & $1831.05\times 10^{-9}$ \\
    \hline
    \end{tabular}
    \caption{Upper bound on $2-$norm of $X^{-1}$ with parameters $a(z)=z+1$, $b(z)=(z+1)^2$, $T=1,$ $z_l=0$, and $z_r=1$. The entries in fourth column are obtained from (\ref{eq:xinvnorm}).}
    \label{tab:normX}
\end{table}

\begin{table}
    \centering
    \begin{tabular}{c|c|c|c|c}
    \hline
        $N$ & $\|Y\|_{\infty}$ &   $\|Y\|_{1}$ & Upper bound on  &  $\|Y\|_2$\\
        & & & $\|Y\|_2$, using (\ref{eq:normy})& \\
    \hline \hline 
        25 & 9214.83 & 9217.33 & 9215.83 & 8373.84\\
        50 & 38414.33 & 38417.33 & 38415.83 & 35813.62\\
        100 & 156814.33  & 156817.33 & 156815.83 &149308.18 \\
        200 & 633614.33 & 633617.33 & 633615.83 & 612829.53\\
        400 & 2547214.33 & 2547217.33 & 2547215.83 & 2491169.46\\
        800 & 10214414.33 & 10214417.33  & 10214415.83 & 10065975.94\\
    \hline
    \end{tabular}
    \caption{Upper bound on $2-$norm of $Y$ with parameters $a(z)=z+1$, $b(z)=(z+1)^2$, $z_l=0$, and $z_r=1$. The entries in fifth column are $\|Y\|_2=\sqrt{\lambda_{max}(Y^*Y)}$, where $\lambda_{max}$ is the maximum eigenvalue of $Y^{*}Y$.}
    \label{tab:normY}
\end{table}
\begin{table}
    \centering
    \begin{tabular}{c|c|c|c|c|c}
    \hline
         $N$ & $M$ & Upper bound & Upper bound & Upper bound for & $\kappa(I+W)$ \\
        & & of $\|X^{-1}\|_{2}$ & of $\|Y\|_2$  & $\kappa(I+W)$&  \\
        & & & & from Theorem \ref{theo:condition_num}&\\
    \hline \hline 
        25 & 800 & $1935.87\times10^{-6}$  & 9215.83 & 18.84 & 15.93\\
        50 & 3200 & $4840.86\times 10^{-7}$ & 38415.83 & 19.60 & 17.42\\
        100 & 12800 & $1210.29\times 10^{-7}$  & 156815.83 & 19.98 & 18.30\\
        200 & 51200 & $3025.75\times 10^{-8}$ & 633615.83 & 20.17 &18.84\\
        400 & 204800 & $7564.41\times 10^{-9}$ & 2547215.83 & 20.27 & 19.17\\
        800 & 819200 & $1891.10\times10^{-9}$ & 10214415.83 & 20.32 &  19.39\\
    \hline
    \end{tabular}
    \caption{Upper bound for $\kappa(I+W)$, i.e. condition number of $(I+W)$, with parameters $a(z)=z+1$, $b(z)=(z+1)^2$, $T=1,$ $z_l=0$, and $z_r=1$. The condition numbers given in the last column are computed using MATLAB.}
    \label{tab:contion_num}
\end{table}

\section{Conclusions and future directions}\label{sec:conclu}
A novel difference equation based approach has been developed to investigate the stability of variable coefficient convection diffusion equations using compact schemes. It has been observed that the compact schemes for variable coefficient problems are stable under certain restrictions. Further, the unconditional stability of compact schemes for the constant coefficient case has been proved theoretically. An estimate on the condition number of amplification matrix has also been derived. The MATLAB algorithms have been developed, and a supporting numerical example has been provided. As a future scope, the proposed approach can be extended to the multi-dimensional problems, system of PDEs, etc. This work finds an application to investigate the stability of the variable coefficient PDEs which can not be transformed to constant coefficient problems.

\section*{Acknowledgments}
Authors acknowledge the financial support from the National Board for Higher Mathematics (NBHM), Department of Atomic Energy, Government of India, under the grant number 02011-32-2023-R$\&$D-II-13347 and the Department of Science and Technology under the Project-Related Personal Exchange Grant DST/INTDAAD/P-12/2020. 

\appendix
\section{MATLAB codes}\label{sec:appendixA}

\subsection{Expression of $D^1_N$ } \label{ssec:1} The following algorithm defines a function that gives symbolic expression of $D^1_N$.

\begin{lstlisting}[frame=single]
function[D1N]=betaandD(N)
syms(sym('A',[1 N-1]));
A=(sym('A',[1 N-1]));
syms(sym('B',[1 N-1]));
B=(sym('B',[1 N-1]));
syms(sym('C',[1 N-1]));
C=(sym('C',[1 N-1]));
syms(sym('P',[1 N+1]));
P=(sym('P',[1 N+1]));
P(1)=1;P(3)=-B(1)*P(2)-C(1);
j=4;
for i=1:N-2
P(j)=(-C(j-2))*A(j-3)*P(j-2)+(-B(j-2))*P(j-1);
j=j+1;
end
K=coeffs(P(N+1),P2);
D1N=simplifyFraction(K(2)); 
end
\end{lstlisting}

\subsection{Roots of $D^1_N$}\label{ssec:2}  The following algorithm defines a function that gives the symbolic computation of the roots of $D^1_N$.
\begin{lstlisting}[frame=single] 
function[u]=roots_value(N,p,q,r,l,m,n)
syms lambda
syms(sym('A',[1 N-1]));
A=(sym('A',[1 N-1]));
syms(sym('B',[1 N-1]));
B=(sym('B',[1 N-1]));
syms(sym('C',[1 N-1]));
C=(sym('C',[1 N-1]));
for h=1:N-1
A(h)=n(h)-lambda*r(h);B(h)=m(h)-lambda*q(h);C(h)=l(h)-lambda*p(h);
end
syms(sym('P',[1 N+1]));
P=(sym('P',[1 N+1]));
P(1)=1;P(3)=-B(1)*P(2)-C(1);j=4;
for i=1:N-2
P(j)=(-C(j-2))*A(j-3)*P(j-2)+(-B(j-2))*P(j-1);
j=j+1;
end
K=coeffs(P(N+1),P2); 
D1N=simplifyFraction(K(2)); 
M=coeffs(D1N,lambda);V=zeros(N,1);
for b=1:N
V(b)=M(N-b+1);
end
u=roots(V); 
end
\end{lstlisting}

\subsection{Positivity test of roots of $D^1_N$} \label{ssec:3}  The following algorithm reads the coefficients and parameters associated with the initial-boundary value problem \eqref{eq:oned_main_equation_2_v}, \eqref{eq:initial_condition_1d_v}, \eqref{eq:oned_boun_v} and the related compact difference scheme. This computes the numerical values of the roots of $D^1_N$ and conveys if all are positive or not.
\begin{lstlisting}[frame=single]
N=input('Enter value of N=');
z_l=input('Enter value of left space boundary point z_l=');
z_r=input('Enter value of right space boundary point z_r=');
dv=input('Enter value of time step dv=');
syms a(z) b(z) w1(z) y1(z) w2(z) y2(z)
a(z)=input('Enter the convection coefficient a(z)=');
b(z)=input('Enter the diffusion coefficient b(z)=');
x=linspace(z_l,z_r,N+1);dz=x(2)-x(1); 
w=a(x);y=b(x);w1(z)=diff(a(z));y1(z)=diff(b(z));
w2(z)=diff(w1(z));y2(z)=diff(y1(z)); 
wd=w1(x);yd=y1(x);wdd=w2(x);ydd=y2(x);
gamma=zeros(N-1,1);xeta=zeros(N-1,1);alpha=zeros(N-1,1);
p=zeros(N-1,1);q=zeros(N-1,1);r=zeros(N-1,1); 
l=zeros(N-1,1);m=zeros(N-1,1);n=zeros(N-1,1);
for i=1:N-1 
gamma(i)=(w(i+1)/y(i+1))+(2/y(i+1))*yd(i+1);
xeta(i)=w(i+1)-((dz^2)/12)*((w(i+1)/y(i+1))*wd(i+1)+(2/y(i+1))*wd(i+1)*yd(i+1)-wdd(i+1));
alpha(i)=y(i+1)+((dz^2)/12)*(((w(i+1)/y(i+1))*yd(i+1))-2*wd(i+1)+ydd(i+1)-((2/y(i+1))*(yd(i+1)^2))+((w(i+1)^2)/y(i+1)));
p(i)=((2+dz)*gamma(i))/(24*dv);
q(i)=5/(6*dv);
r(i)=((2-dz)*gamma(i))/(24*dv);
l(i)=(-xeta(i)/(2*dz))-(alpha(i)/(dz^2));
m(i)=(2*alpha(i))/(dz^2);
n(i)=(xeta(i)/(2*dz))-(alpha(i)/(dz^2));
end
[D1N]=betaandD(N);
fprintf('The expression for D1_N is= %s .\n',D1N);
[u]=roots_value(N,p,q,r,l,m,n);
if min(real(u)) > 0
disp('Result: Since the roots are positive, the compact scheme is stable for above parameter')
else
disp('Result: Since the roots are NOT positive, Stability condition is not met for above parameter')
end
\end{lstlisting}

\bibliographystyle{elsarticle-num}
\bibliography{references}

\begin{thebibliography}{10}
\expandafter\ifx\csname url\endcsname\relax
  \def\url#1{\texttt{#1}}\fi
\expandafter\ifx\csname urlprefix\endcsname\relax\def\urlprefix{URL }\fi
\expandafter\ifx\csname href\endcsname\relax
  \def\href#1#2{#2} \def\path#1{#1}\fi

\bibitem{BlaS73}
F.~Black, M.~Scholes, Pricing of options and corporate liabilities, J.
  Political Econ. 81 (1973) 637--654.

\bibitem{roach1976computational}
P.~J. Roache, Computational fluid dynamics, Hermosa, Albuquerque, NM (1976).

\bibitem{isenberg1973heat}
J.~Isenberg, C.~Gutfinger, Heat transfer to a draining film, Int. J. Heat Mass
  Transf. 16~(2) (1973) 505--512.

\bibitem{parlange1980water}
J.~Y. Parlange, Water transport in soils, Annu. Rev. Fluid Mech. 12~(1) (1980)
  77--102.

\bibitem{AchP05}
Y.~Achdou, O.~Pironneau, Computational Methods for Options Pricing, SIAM, 2005.

\bibitem{Can98}
C.~Canuto, M.~Y. Hussaini, A.~Quarteroni, T.~A. Zang, Spectral Methods in Fluid
  Dynamics, Springer-Verlag, 1998.

\bibitem{MKAW20}
M.~Mehra, K.~S. Patel, A.~Shukla, Wavelet-optimized compact finite difference
  method for convection–diffusion equations, Int. J. Nonlinear Sci. Numer. 22
  (2021) 353--372.

\bibitem{Tho95}
J.~W. Thomas, Numerical Partical Differential Equation: Finite Difference
  Methods, Springer--Verlog, 1995.

\bibitem{TRan00}
D.~Tavella, C.~Randall, Pricing Financial Instruments: The Finite Difference
  Method, John Wiley \& Sons, 2000.

\bibitem{Jcstrik04}
J.~C. Strikewerda, Finite Difference Schemes and Partial Differential
  Equations, SIAM, 2004.

\bibitem{Rigal94}
A.~Rigal, High-order difference scheme for unsteady one dimensional diffusion
  convection problems, J. Comput. Phys. 114 (1994) 59--76.

\bibitem{Spotz95}
W.~F. Spotz, G.~F. Carey, High-order compact scheme for the steady
  stream-function vorticity equation, Int. J. Numer. Methods Eng. 38 (1995)
  3497--3512.

\bibitem{Vivek09}
V.~Kumar, High-order compact finite-difference scheme for singularly-perturbed
  reaction-diffusion problems on a new mesh of {S}hishkin type, J. Optim.
  Theory. Appl. 143 (2009) 123--147.

\bibitem{wang2018stability}
K.~Wang, H.~Wang, Stability and error estimates of a new high-order compact
  {ADI} method for the unsteady 3{D} convection-diffusion equation, Appl. Math.
  Comput. 331 (2018) 140--159.

\bibitem{wu2018stability}
F.~Wu, D.~Li, J.~Wen, J.~Duan, Stability and convergence of compact finite
  difference method for parabolic problems with delay, Appl. Math. Comput. 322
  (2018) 129--139.

\bibitem{li2022stability}
T.~Li, J.~Lu, C.-W. Shu, Stability analysis of inverse {L}ax-{W}endroff
  boundary treatment of high order compact difference schemes for parabolic
  equations, J. Comput. Appl. Math. 400 (2022) 113711.

\bibitem{vilar2015development}
F.~Vilar, C.-W. Shu, Development and stability analysis of the inverse
  {L}ax-{W}endroff boundary treatment for central compact schemes, ESAIM Math.
  Model. Numer. Anal. 49~(1) (2015) 39--67.

\bibitem{zhang2021convergence}
Q.~Zhang, L.~Liu, Convergence and stability in maximum norms of linearized
  fourth-order conservative compact scheme for
  {B}enjamin-{B}ona-{M}ahony-{B}urgers’ equation, J. Sci. Comput. 87 (2021)
  1--31.

\bibitem{li2018convergence}
L.~Li, B.~Zhou, X.~Chen, Z.~Wang, Convergence and stability of compact finite
  difference method for nonlinear time fractional reaction-diffusion equations
  with delay, Appl. Math. Comput. 337 (2018) 144--152.

\bibitem{zhang2022pointwise}
Q.~Zhang, C.~Sun, Z.-W. Fang, H.-W. Sun, Pointwise error estimate and stability
  analysis of fourth-order compact difference scheme for time-fractional
  {B}urgers’ equation, Appl. Math. Comput. 418 (2022) 126824.

\bibitem{KPMA17}
K.~S. Patel, M.~Mehra, A numerical study of {A}sian option with high-order
  compact finite difference scheme, J. Appl. Math. Comput. 57 (2018) 467--491.

\bibitem{KPMD17}
K.~S. Patel, M.~Mehra, High-order compact finite difference scheme for pricing
  {A}sian option with moving boundary condition, Differ. Equ. Dyn. Syst. 27
  (2019) 39--56.

\bibitem{DurH15}
B.~During, C.~Heuer, High-order compact schemes for parabolic problems with
  mixed derivatives in multiple space dimensions., SIAM J. Numer. Anal. 53
  (2015) 2113--2124.

\bibitem{kalita2002class}
J.~C. Kalita, D.~C. Dalal, A.~K. Dass, A class of higher order compact schemes
  for the unsteady two-dimensional convection--diffusion equation with variable
  convection coefficients, Int. J. Numer. Methods Fluids. 38~(12) (2002)
  1111--1131.

\bibitem{Spotz01}
W.~F. Spotz, G.~F. Carey, Extension of high‐-order compact schemes to
  time‐-dependent problems, Numer. Methods Partial Differ. Equ. 17 (2001)
  657--672.

\bibitem{Tref96}
L.~N. Trefthen, Finite Difference and Spectral Methods for Ordinary and Partial
  Differential Equations, Cornell University, 1996.

\bibitem{mohebbi2010high}
A.~Mohebbi, M.~Dehghan, High-order compact solution of the one-dimensional heat
  and advection-diffusion equations, Appl. Math. Model. 34~(10) (2010)
  3071--3084.

\bibitem{GoswamiKS}
A.~Goswami, K.~S. Patel, Stability analysis and condition number of compact
  scheme for convection-diffusion equations, arXiv:2201.05854.

\bibitem{spotz1995high}
W.~F. Spotz, High-order compact finite difference schemes for computational
  mechanics, The University of Texas at Austin, 1995.

\bibitem{SElaydi05}
S.~N. Elaydi, An introduction to difference equations, Springer, 2005.

\bibitem{Smith78}
G.~D. Smith, Numerical Solution of Partial Differential Equations, Oxford
  University Press, 1978.

\end{thebibliography}
\end{document}